\numberwithin{equation}{section}
\newcommand{\ma}{\mathcal}
\newcommand{\mf}{\mathfrak}
\newcommand{\m}{\CMcal}
\theoremstyle{plain}
\newtheorem{theorem}{Theorem}[section]
\newtheorem{definition}[theorem]{Definition}
\newtheorem{remark}[theorem]{Remark}
 \title{KSGNS construction for $\tau$-maps on S-modules and $\mf{K}$-families}
     \author{Santanu Dey}
     \address{Department of Mathematics, Indian Institute of Technology Bombay, Mumbai-400076, India}
     \email{santanudey@iitb.ac.in}
     \thanks{}
     \author{Harsh Trivedi}
     \address{Department of Mathematics, Indian Institute of Technology Bombay, Mumbai-400076, India}
     \email{harsh1@iitb.ac.in}
     \thanks{}
     \date{\today}
\begin{document}
     \keywords{$\alpha$-completely positive maps, completely
positive definite kernels, $C^*$-algebras, Hilbert $C^*$-modules, KSGNS representation,  
 reproducing kernels, S-modules, $\tau$-maps}
\subjclass[2010]{46E22,~46L05,~46L08,~47B50,~81T05} 

     \begin{abstract} 
We introduce S-modules, generalizing the notion of Krein $C^*$-modules, where a fixed unitary replaces
the symmetry of Krein $C^*$-modules. The representation theory 
on S-modules is explored and for a given $*$-automorphism $\alpha$ on a $C^*$-algebra the KSGNS construction for $\alpha$-completely 
positive maps is proved. An extention of this theorem for $\tau$-maps
is also achieved, when $\tau$ is
an $\alpha$-completely positive map, along with a decomposition theorem for $\mf K$-families.

     \end{abstract}
\maketitle

\section{Introduction}

 A {\it symmetry} on a Hilbert space is a bounded operator $J$ such that $J=J^*=J^{-1}$. A Hilbert space along with a symmetry, 
forms a {\it Krein space} where the symmetry induces an indefinite inner-product on the space. Dirac \cite{D42} and Pauli \cite{P43} were among the 
pioneers to 
explore the quantum field theory using Krein spaces.

In quantum field theory one
encounters Wightman functionals which are
positive linear functionals on Borchers algebras (cf. \cite{B62}).
In the massless or the guage quantum field theory, Strocchi showed that the locality and  positivity both cannot be assumed together in a model. The axiomatic field theory motivates theoretical physicists to keep the locality assumption and sacrifice positivity by  
considering  indefinite inner products (cf. \cite{B77}), and more specifically Krein spaces (cf. \cite{B74}), in the gauge field theory. In this context Jakobczyk
defined the $\alpha$-positivity, where $\alpha$ is a  $*$-automorphism of Borchers algebra, in  \cite{Ja84} and derived a reconstruction theorem for 
Strocchi-Wightman states.

\begin{definition}
 Let $\m B$ be a $*$-subalgebra of a unital $*$-algebra $\m A$ containing the unit. Assume $P:\m A\to\m B$ to be a conditional expectation 
 (i.e. $P$ is a linear map preserving the 
 unit and the involution such that $P(bab')=bP(a)b'$ for each $a\in\m A$; $b\in\m B$). A Hermitian linear functional 
 $\tau$ defined on $\m A$ is called a {\rm $P$-functional} if the following holds:
 \begin{itemize}
  \item [(i)] $\tau\circ P=\tau$,
  \item [(ii)] $2\tau (P(a)^*P(a))\geq \tau(a^*a)$ for all $a\in\m A$.
 \end{itemize}
 \end{definition}
If we define a linear mapping $\alpha:\m A\to\m A$ by $\alpha(a)=2P(a)-a$ for each $a\in\m A$, then the $P$-functional $\tau$ satisfies
\begin{align*}
\tau(\alpha(a)\alpha(a'))= \tau(aa')~\mbox{and}~\tau(\alpha(a)^*a)\geq 0~\mbox{for all}~a,a'\in\m A.
\end{align*}
Thus $P$-functionals generalize $\alpha$-positivity. The Gelfand-Naimark-Segal (GNS) construction 
for states, is a 
fundamental result in operator theory, which illustrates how using a state on a $C^*$-algebra we can obtain a cyclic representation
of that $C^*$-algebra 
on a Hilbert space. Antoine and Ota \cite{AO89} proved that using $P$-functionals one can obtain unbounded GNS
representations of a $*$-algebra on
a Krein space.

The usefulness of $\alpha$-positivity in gauge field theory led  to the concept of the $\alpha$-completely positive maps which were introduced in \cite{HJU10} and for them a KSGNS type construction 
was carried out on certain modules called Krein $C^*$-modules where $\alpha^2 = id$ (i.e., order of $\alpha$ is 2). We extrend this study of $\alpha$-completely positive maps for $\alpha$ not necessarily of order 2  and obtain a KSGNS type construction on a bigger class of modules called $S$-modules.
To illustrate KSGNS construction we first need to recall some notions: We say that a linear map $\tau$ from a $C^*$-algebra $\m B$ to a $C^*$-algebra $\m C$ is {\it completely positive} if
$$\sum_{i,j=1}^n c_j^{*}
\tau(b_j^{*}b_i)c_i\geq 0$$ for all $b_1,b_2,\ldots,b_n\in\m B$; $c_1,c_2,\ldots,c_n\in\m C$ and $n\in \mathbb N$. The completely positive
maps are crucial to the study of the classification of $C^*$-algebras, the classification of $E_0$-semigroups, etc. The Stinespring theorem 
(cf. \cite{St55}) characterizes completely positive maps and if we choose the completely positive maps to be states it reduces to the GNS construction.

A {\it Hilbert $C^*$-module over a $C^*$-algebra $\m B$} or a  {\it Hilbert $\m B$-module} is a complex vector space which is a
right $\m B$-module such that the module action is compatible with the scalar product and there is a mapping $\langle
\cdot,\cdot \rangle : E \times E \to \m{B}$ satisfying the following conditions:
\begin{itemize}
 \item [(i)] $\langle x,x \rangle \geq 0 ~\mbox{for}~ x \in {E}  $ and $\langle x,x \rangle = 0$ only if $x = 0 ,$
\item [(ii)] $\langle x,yb \rangle = \langle x,y \rangle b ~\mbox{for}~ x,y \in {E}$ and $~\mbox{for}~ b\in \m B,  $
 \item [(iii)]$\langle x,y \rangle=\langle y,x \rangle ^*~\mbox{for}~ x ,y\in {E} ,$
\item [(iv)]$\langle x,\mu y+\nu z \rangle = \mu \langle x,y \rangle +\nu \langle x,z \rangle ~\mbox{for}~ x,y,z \in {E} $ and for 
$\mu,\nu \in \mathbb{C}$
\item [(v)] $E$ is complete with respect to the norm $\| x\| :=\|\langle
x,x\rangle\|^{1/2} ~\mbox{for}~ x \in {E}$. 
\end{itemize}

\begin{definition}
 Let $E$ and $ F$ be Hilbert $\m
A$-modules over a $C^*$-algebra $\m A$. For a given map $S:E\to F$
if there exists a map $S': F\to E$ such
that
 \[
 \langle S (x),y\rangle =\langle x,S'(y) \rangle~\mbox{for all}~x\in E, y\in  F,
 \]
then $S'$ is unique for $S$, and we say $S$ is {\rm adjointable} and denote $S'$ by $S^{*}$. 
Every adjointable map $S:E\to F$ is {\rm right $\m A$-linear}, i.e., $S(xa)=S(x)a$ for all $x\in E,$ $a\in\m A$. The symbol
$\ma B^a (E,F)$ denotes the collection of all 
adjointable maps from $E$ to $ F$. We use $\ma B^a (E)$ for $\ma B^a (E,E)$. The {\rm strict topology} on $\ma B^a ({E})$ is the topology induced
by the seminorms $a\mapsto \|ax\| $, $a\mapsto \|a^*y\| $ for each
$x,y\in E$. 
\end{definition}
Kasparov \cite{Kas80} obtained the following theorem, called {\it Kasparov-Stinespring-Gelfand-Naimark-Segal (KSGNS) 
construction} (cf. \cite{La95}), which is a dilation theorem for strictly continuous completely positive maps: 

\begin{theorem}
 Let $\m B$ and $\m C$ be $C^*$-algebras. Assume $E$ to be a Hilbert $\m C$-module and $\tau:\m B\to \ma B^a ({E})$ to be a strictly continuous 
 completely positive map. Then we get a Hilbert $\m C$-module $F$ with a $*$-homomorphism $\pi:\m B\to \ma B^a (F)$ and $V\in\ma B^a (E,F)$ such that
 $\overline{span}~{\pi(\m B)VE}=F$ and 
 \[
  \tau(b)=V^*\pi(b)V~\mbox{for all}~b\in\m B.
 \] 
\end{theorem}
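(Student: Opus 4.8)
The plan is to follow the classical Stinespring--Kasparov dilation recipe, building $F$ as the completion of a quotient of the algebraic tensor product $\m B\otimes E$ on which complete positivity supplies a $\m C$-valued semi-inner product. First I would equip $\m B\otimes E$ (tensor product over $\mathbb C$) with the right $\m C$-action $(b\otimes x)c=b\otimes(xc)$ and the sesquilinear form determined by
\[
\langle b\otimes x,\,b'\otimes x'\rangle=\langle x,\tau(b^*b')x'\rangle .
\]
Complete positivity of $\tau$ is exactly what makes this form positive semi-definite: for $\xi=\sum_i b_i\otimes x_i$ one has $\langle\xi,\xi\rangle=\sum_{i,j}\langle x_i,\tau(b_i^*b_j)x_j\rangle$, and since $[b_i^*b_j]\geq 0$ in $M_n(\m B)$ its image $[\tau(b_i^*b_j)]$ is a positive element of $M_n(\ma B^a(E))=\ma B^a(E^n)$, so pairing with the vector $(x_i)_i$ yields a positive element of $\m C$. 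Quotienting by the submodule $N=\{\xi:\langle\xi,\xi\rangle=0\}$ (a right submodule by the Cauchy--Schwarz inequality for semi-inner products) and completing gives the Hilbert $\m C$-module $F$.

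Next I would define the representation by $\pi(b)(b'\otimes x)=(bb')\otimes x$ on $\m B\otimes E$. The essential point is that $\pi(b)$ descends to the quotient and is bounded, which follows from the operator inequality $[\tau(b_i^*b^*bb_j)]\leq\|b\|^2[\tau(b_i^*b_j)]$: writing $\|b\|^2\mathbf 1-b^*b=d^*d$ in the unitization and applying complete positivity to the positive matrix $[(db_i)^*(db_j)]$, then pairing with $(x_i)_i$, gives $\langle\pi(b)\xi,\pi(b)\xi\rangle\leq\|b\|^2\langle\xi,\xi\rangle$, so $\pi(b)$ is well defined on $F$ with $\|\pi(b)\|\leq\|b\|$. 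A direct computation shows $\langle\pi(b)\xi,\eta\rangle=\langle\xi,\pi(b^*)\eta\rangle$, so each $\pi(b)$ is adjointable with $\pi(b)^*=\pi(b^*)$, and multiplicativity $\pi(b)\pi(b')=\pi(bb')$ is immediate on elementary tensors; thus $\pi:\m B\to\ma B^a(F)$ is a $*$-homomorphism.

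Finally I would produce $V$. In the unital case one simply sets $Vx=1\otimes x$, but in general I would fix an approximate unit $(e_\lambda)$ of $\m B$ and define $Vx=\lim_\lambda e_\lambda\otimes x$ in $F$; both the Cauchy property of this net and the estimate $\langle Vx,Vx\rangle\leq\|\tau\|\langle x,x\rangle$ rest on the strict continuity of $\tau$, via the strict convergence of $\tau(e_\lambda)$ in $\ma B^a(E)$. Computing $\langle Vx,b\otimes x'\rangle=\lim_\lambda\langle x,\tau(e_\lambda b)x'\rangle=\langle x,\tau(b)x'\rangle$ identifies the adjoint on elementary tensors as $V^*(b\otimes x')=\tau(b)x'$, confirming $V\in\ma B^a(E,F)$. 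The intertwining relation then drops out from $\pi(b)Vx=\lim_\lambda be_\lambda\otimes x=b\otimes x$, whence $V^*\pi(b)Vx=\tau(b)x$, and the same identity $\pi(b)Vx=b\otimes x$ shows $\pi(\m B)VE$ contains every elementary tensor, giving $\overline{span}\,\pi(\m B)VE=F$. I expect the main obstacle to be precisely this last step in the non-unital setting: proving that the net $(e_\lambda\otimes x)$ converges and that the resulting $V$ is adjointable with the stated adjoint. This is the only place where strict continuity is genuinely required, and it is what replaces the trivial choice $Vx=1\otimes x$ available when $\m B$ is unital.
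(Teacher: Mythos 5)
Your proposal is correct, and it is the standard KSGNS argument (essentially the proof of Theorem 5.6 in Lance's book \cite{La95}): complete positivity makes the form on $\m B\bigotimes_{alg}E$ positive semi-definite via $[\tau(b_i^*b_j)]\geq 0$ in $\ma B^a(E^n)$, the inequality $[\tau(b_i^*b^*bb_j)]\leq\|b\|^2[\tau(b_i^*b_j)]$ (via $\|b\|^2\mathbf 1-b^*b=d^*d$ in the unitization) makes $\pi$ well defined and contractive, and the approximate-unit limit produces $V$. There is, however, no in-paper proof to compare against: the paper states this theorem as background, quoting it from Kasparov \cite{Kas80}, and never proves it. What is worth noting is that your construction is precisely the template the paper adapts in its own Theorem \ref{THM1} for $\alpha$-CP maps: there the algebras are unital, so the delicate non-unital step you flag disappears and $V$ is defined directly on elementary tensors (in the paper's twisted setting, $Vx=1\otimes U_1x+K$), while the role of your inequality $[\tau(b_i^*b^*bb_j)]\leq\|b\|^2[\tau(b_i^*b_j)]$ is played by the axiom (iii) of Definition \ref{def21}, since for $\alpha$-CP maps this bound is assumed rather than derived. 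One small correction to your closing remarks: the estimate $\langle Vx,Vx\rangle\leq\|\tau\|\langle x,x\rangle$ and the identification $V^*(b\otimes x')=\tau(b)x'$ need only positivity and norm continuity of $\tau$ (using $e_\lambda b\to b$ in norm); strict continuity is genuinely required only to make the net $(e_\lambda\otimes x)$ Cauchy, via strict convergence of $\tau((e_\lambda-e_\mu)^2)$ to $0$ on the bounded net.
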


Let $(E,\langle\cdot,\cdot\rangle)$ be a Hilbert $C^*$-module over a $C^{\ast}$-algebra $\m{A}$ and let $J$ be a fundamental
symmetry on $E$, i.e., $J$ is an invertible adjointable map on $E$ such that $J=J^*=J^{-1}$. Define an
$\m{A}$-valued indefinite inner product on $E$ by
\begin{eqnarray}
[x,y]: =\langle Jx,y\rangle~\mbox{for all}~x,y\in E.
\end{eqnarray}
In this case we say $(E,\m A,J)$ is a {\it Krein $\m{A}$-module}.
If $\m{A}=\mathbb{C}$, then $(E,\mathbb{C},J)$ is a {\rm Krein
space} and in addition if $J$ is the identity operator, then it becomes a Hilbert space. In the definition of Krein spaces if we 
replace the symmetry $J$ by a unitary,
then we get {\it $S$-spaces}. Szafraniec introduced the notion of S-spaces in \cite{Sz09}. Phillipp, Szafraniec and Trunk \cite{PST11} 
investigated invariant subspaces of selfadjoint operators
in Krein spaces by using results obtained through a detailed analysis of S-spaces.  We introduce the notion of S-modules below:

\begin{definition}
Let $(E,\langle\cdot,\cdot\rangle)$ be a Hilbert $C^*$-module over a $C^{\ast}$-algebra $\m{A}$ and let $U$ be a unitary on $E$,
i.e., $U$ is an invertible adjointable map on $E$ such that $U^*=U^{-1}$. Then we can define an
$\m{A}$-valued sesquilinear form by
\begin{eqnarray}
[x,y]: =\langle x,Uy\rangle~\mbox{for all}~x,y\in E.
\end{eqnarray}
In this case we say $(E,\m A, U)$ is an {\rm S-module}.
\end{definition}

If $U=I$, then $\langle \cdot,\cdot\rangle$ and $[\cdot,\cdot]$ coincide for the S-module $(E,\m A, U)$. In the case when $U=U^*$, the S-module 
$(E,\m A, U)$ forms a
Krein $\m A$-module. The following is the definition
of an $\alpha$-completely positive map which will play an
important role in this article:

\begin{definition}\label{def21}
Let $\m A$ be a unital $C^*$-algebra and $\alpha:\m A\to\m A$ be an automorphism, i.e., 
$\alpha$ is a unital bijective $*$-homomorphism.
Let $\m B$ be a $C^*$-algebra and $E$ be a Hilbert $\m B$-module. If $\left( E
,\m B,U\right) $ is an S-module, then a map $\tau :\m{A}\rightarrow \mathcal{B}^a(%
{E})$ is called {\rm $\alpha $-completely positive (or $\alpha
$-CP)} if it is a $*$-preserving map such that

\begin{enumerate}
\item[(i)]$\tau \left( \alpha (a)\right)=U^*\tau (a)U=\tau (a)$
for all $a\in \m{A}$;

\item[(ii)] $\sum\limits_{i,j=1}^{n}\left\langle {x}
_{i},\tau \left( \alpha \left( a ^{\ast }_{i}\right)a_{j}\right) {x}
_{j}\right\rangle \geq 0$ for all $n\geq 1$; $a_{1},\ldots ,a_{n}\in %
\m{A}$ and ${x} _{1},\ldots ,{x} _{n}\in E$;

\item[(iii)] for any $a\in \m{A}$, there is $M(a)>0$ such that
\begin{equation*}
\sum\limits_{i,j=1}^{n}\left\langle {x}_i,\tau \left(\alpha \left( a^*_{i}a^*\right)aa_{j}\right){x}_j \right\rangle
\leq M(a)\sum\limits_{i,j=1}^{n}\left<{x_i}, \tau \left( \alpha(a^{*}_{i})a_{j}\right){x}_j \right>
\end{equation*}%
for all $n\geq 1$; ${x}_{1},\ldots ,{x}_{n}\in E$ and $a_{1},\ldots ,a_{n}\in \m{A}$.
\end{enumerate}

\end{definition}

Several operator theorists and mathematical physicists recently explored the dilation theory of $\mathcal{B}^a(%
{E})$-valued $\alpha$-CP maps using Krein $C^{\ast}$-modules
(cf. \cite{HJ10, HJU10}) where $E$ is a Hilbert $C^*$-module and $\alpha^2=id_{\m A}$. 
In this article we explore the KSGNS construction of certain maps between 
Hilbert $C^*$-modules which we define below:

\begin{definition}
 Let ${E}$ be a Hilbert $\m A$-module, ${F}$ be a Hilbert $\m
B$-module and $\tau$ be a linear map from $\m A$ to $\m B$. A map ${T}:E\to F$ is called {\rm $\tau$-map} if
$$\langle T(x),T(y)\rangle=\tau(\langle x,y\rangle)~\mbox{for all} ~x,y\in{E}.$$  
\end{definition}
\noindent The dilation theory of $\tau$-maps has been explored in \cite{BRS12}, \cite{Sk12}, \cite{SSu14}, \cite{HJK13}, \cite{UMM14}, etc.

For any Hilbert spaces $\m H$ and $\m K$, let $\ma B(\m H,\m K)$ denote the space of all bounded linear operators from $\m H$ to $\m K$.
Assume $E$ to be a Hilbert $\m
B$-module where $\m B$ is a von Neumann algebra acting on a Hilbert space $\m H$ and so $E\bigotimes \m H$ is a Hilbert space where 
$\bigotimes$ denotes the interior tensor
product. For a fixed $x\in E$ we define a bounded linear
operator $L_x:\m H\to E\bigotimes \m H$ by
$$L_x (h):=x\otimes h~\mbox{for}~ h\in \m H.$$
We have $L^*_{x_1} L_{x_2} =\langle x_1,x_2\rangle~\mbox{for all}~
x_1,x_2\in E$. This allows us to identify  each $x\in E$ with $L_x$ and thus
$E$ is identified with a concrete submodule of $\ma B(\m H,E\bigotimes \m H)$. We say that $E$ is a {\it von Neumann $\m B$-module}
or a {\it von Neumann
module over $\m B$} if $E$ is strongly closed in $\ma B(\m
H,E\bigotimes \m H)$. 
This notion of von Neumann modules is due to Skeide (cf. \cite{Sk06}). In fact, $a\mapsto a\otimes id_{\m H}$ is a representation of
$\ma B^a(E)$ on $E\bigotimes \m H$, and therefore it is an isometry. Thus we can consider $\ma B^a(E)\subset \ma B(E\bigotimes \m H)$
and in this way $\ma B^a(E)$ is a von Neumann algebra acting on $E\bigotimes \m H$. 
The von Neumann modules were used as a tool in \cite{BSu13} to explore 
Bures distance 
between two completely
positive maps. In \cite{H14},
we proved a Stinespring type theorem for
$\tau$-maps, when $\m B$ is any von Neumann algebra and $F$ is any von Neumann $\m B$-module. 
As in \cite{H14}, in this article too at certain places we work with von Neumann modules
instead of Hilbert $C^*$-modules because all von Neumann modules are self-dual, and hence they are complemented 
in all Hilbert $C^*$-modules which contain them as submodules. 

In \cite{UMM14}, for an order two automorphism $\alpha$ on a $C^*$-algebra $\m A$, the authors constructed
representations of $\m A$ on Krein $C^{\ast}$-modules by considering $\mathcal{B}^a(%
{E})$-valued $\alpha$-CP maps where $E$ is a Krein $C^*$-module and proved a KSGNS type construction for $\tau$-maps
where $\tau$ is an $\alpha$-CP map.
This is the motivation for the approach taken by us to study the representation theory of $\tau$-maps on $S$-modules in Section \ref{secK1}.

A $C^*$-algebra valued positive definite kernels were defined by Murphy in \cite{M97}. The reproducing kernel Hilbert $C^*$-modules,
which generalizes the terminology of reproducing kernel Hilbert spaces (cf. \cite{BCR84}), were analysed extensively in \cite{H08} by Heo.
Szafraniec \cite{S10} obtained a dilation theorem, 
which extends the 
Sz-Nagy's principal theorem \cite{RS90}, for certain $C^*$-algebra valued positive definite functions defined on $*$-semigroups. 
The KSGNS construction is a special case of Szafraniec's dilation theorem.

 Motivated by the 
 definition of $\tau$-map, we introduced the following notion of $\mf{K}$-family in \cite{DH14}:
   Let ${E}$ and ${F}$ be Hilbert $C^*$-modules over $C^*$-algebras $\m B$ and $\m C$ respectively. Assume $\Omega$ to be a set and  
   $ \mf{K}:\Omega\times \Omega\to \ma B(\m B,\m C)$ to be a kernel. 
Let $\ma K^{\sigma}$ be a map from $E$ to $F$ for each $\sigma\in \Omega$. The family $\{\ma K^{\sigma}\}_{\sigma\in \Omega}$ is called {\it $\mf{K}$-family} if
   \[
    \langle \ma K^{\sigma} (x),\ma K^{{\sigma}'}(x')\rangle=\mf{K}^{\sigma,\sigma'}(\langle x, x'\rangle)~\mbox{for}~x,x' \in E;~\sigma,\sigma'\in \Omega.
   \]
We obtain a partial factorization theorem for $\mf K$-families in Section 3, where $\mf K$ is an $\alpha$-CPD kernel, with the help of 
reproducing kernel S-correspondences. We recall the definition of CPD-kernel below:
\begin{definition}
Let $\m B$ and $\m C$ be $C^*$-algebras. By $\ma
B(\m B,\m C)$ we denote the set of all bounded linear maps from  $\m B$ to $\m C$. For a set $\Omega$
we say that a mapping $\mf{K}:\Omega\times \Omega \to \ma B(\m B,\m C)$ is a {\rm completely positive definite kernel} or a {\rm CPD-kernel} over $\Omega$ from
$\m B$ to $\m C$ if
\[
 \sum_{i,j} c^*_i \mf{K}^{\sigma_i, \sigma_j} (b^*_i b_j) c_j \geq 0~\mbox{for all finite choices of $\sigma_i\in \Omega$, $b_i\in \m B$, $c_i\in \m C$. }~
\]
\end{definition}
 Accardi and Kozyrev in \cite{AK01} considered semigroups of CPD-kernels over the set $\Omega = \{0, 1\}$. 
 Barreto, Bhat, Liebscher and Skeide \cite{BBLS04} studied several results regarding structure of type I 
 product-systems of Hilbert $C^*$-modules based on the dilation theory of CPD-kernels over any set $\Omega$. Their approach was based on the Kolmogorov decomposition of a CPD-kernel. Ball, Biswas, Fang 
 and ter Horst \cite{BBQH09} introduced the notion of reproducing kernel $C^*$-correspondences and identified Hardy spaces
 studied by Muhly-Solel \cite{MS04} with a reproducing kernel $C^*$-correspondence for a CPD-kernel which is an analogue 
 of the classical Szeg\"o kernel. Recently in \cite{ABR11}, module-valued coherent states were considered and it was proved 
 that these coherent states gives rise to a CPD-kernel which has a reproducing property.

\section{KSGNS type construction for $\tau$-maps}\label{secK1}

Assume $\left( {E}_{1},\m B,U_{1}\right) $ and $\left( E
_{2},\m B,U_{2}\right) $ to be S-modules, where $E_1$ and $E_2$ are Hilbert $C^*$-modules over a $C^*$-algebra $\m B$.
For each $T\in \mathcal{B}^a(%
{E}_{1},{E}_{2})$, there exists an operator $T^{\natural}\in \mathcal{B}^a(%
{E}_{2},{E}_{1})$ such that
\[
\langle T(x),U_2 y\rangle=\langle x,U_1T^\natural(y)\rangle~\mbox{for all}~x\in E_1,~y\in E_2.
\]
In fact, $T^{\natural}=U^*_{1}T^{\ast }U_{2}$. Suppose $\m{A}$ is a $C^{\ast }$-algebra and $(E,\m B,U)$ be an S-module.
An algebra homomorphism $\pi :\m{A}\rightarrow
\mathcal{B}^a({E})$ is called an {\it $U$-representation of $\m{A}$ on $(E,\m B,U)$} if
$\pi (a^{\ast })=U^*\pi (a)^{\ast }U=\pi (a)^{\natural}$, i.e., $$[\pi (a)x ,y]=[x ,\pi (a^{\ast })y]~\mbox{for all}~x,y\in E.$$


The theorems in this section are analogous to Theorem 3.2 of \cite{HJK13} and Theorem 4.4 of \cite{HJU10}, and Theorem 2.6 of \cite{UMM14}.

\begin{theorem}\label{THM1}
\label{main0} Let $\m A$ and $\m B$ be unital $C^*$-algebras and $\alpha:\m A\to\m A$ be an automorphism.
Suppose $(E_1,\m B,U_1)$ is an S-module where $E_1$ is a Hilbert $\m B$-module and $U_1$ is a unitary on $E_1$. If $\tau:%
\m{A}\rightarrow\mathcal{B}^a({E}_{1})$ is an $\alpha$-CP map, then there exist

\begin{enumerate}
\item [(i)] a Hilbert $\m B$-module $E_0$ and a unitary $U_0$ such that $\left( E%
_{0},\m B,U_{0}\right)$ is an S-module,
\item [(ii)] an
$U_0$-representation $\pi_0$ of $\m{A}$ on $\left( E%
_{0},\m B,U_{0}\right)$ satisfying $$V^*\pi_0(a)^*\pi_0(b)V =V^*\pi_0(\alpha(a)^*b)V~\mbox{for each}~a,b\in\m A,$$ a map $V\in \ma B^a (E_1,E_0)$ 
such that $V^{\natural}=V^{\ast },$ and
$$\tau(a)=V^{*}\pi_0(a)V~\mbox{for all}~a\in\m A.$$
\end{enumerate}
\end{theorem}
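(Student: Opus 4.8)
The plan is to run a GNS/Kasparov-type construction: build $E_0$ from an algebraic tensor product and read off every piece of the required data directly from the three defining properties of an $\alpha$-CP map in Definition \ref{def21}.

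\emph{Construction of the S-module $E_0$.} First I would form the algebraic tensor product $\m A\odot E_1$ over $\mathbb C$ and equip it with the $\m B$-valued sesquilinear form obtained by sesquilinear extension of
\[
\langle a\otimes x,\,b\otimes y\rangle_0:=\langle x,\tau(\alpha(a^*)b)\,y\rangle,\qquad a,b\in\m A,\ x,y\in E_1 ,
\]
with right action $(a\otimes x)b=a\otimes(xb)$. Property (ii) says precisely that $\langle\xi,\xi\rangle_0\ge 0$ for every $\xi\in\m A\odot E_1$. A point worth flagging is that the pointwise kernel $(a,b)\mapsto\tau(\alpha(a^*)b)$ need \emph{not} be selfadjoint when $\alpha$ is not an involution, so Hermitian symmetry of $\langle\cdot,\cdot\rangle_0$ cannot be checked termwise; instead it follows from positivity alone, since a positive $\m B$-valued sesquilinear form is automatically Hermitian by polarization. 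Thus $\langle\cdot,\cdot\rangle_0$ is a $\m B$-valued semi-inner product; I would quotient by its null submodule $N=\{\xi:\langle\xi,\xi\rangle_0=0\}$ (a submodule by Cauchy--Schwarz) and complete, obtaining a Hilbert $\m B$-module $E_0$.

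\emph{The unitary $U_0$ and the representation $\pi_0$.} On simple tensors I would set $U_0(a\otimes x):=\alpha(a)\otimes U_1 x$. Using $\tau\circ\alpha=\tau$ and the relation $U_1^*\tau(\cdot)U_1=\tau(\cdot)$ --- both contained in property (i) --- a short computation gives $\langle U_0\xi,U_0\eta\rangle_0=\langle\xi,\eta\rangle_0$, so $U_0$ passes to the quotient as an isometry; since $\alpha$ and $U_1$ are bijective, $U_0$ is onto with inverse $a\otimes x\mapsto\alpha^{-1}(a)\otimes U_1^*x$, hence extends to a unitary on $E_0$ and $(E_0,\m B,U_0)$ is an S-module. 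Next I would define $\pi_0(a)(b\otimes x):=ab\otimes x$. The homomorphism property and $\pi_0(1)=\mathrm{id}$ are immediate, and property (iii) is exactly the inequality $\langle\pi_0(a)\xi,\pi_0(a)\xi\rangle_0\le M(a)\langle\xi,\xi\rangle_0$, which simultaneously shows $\pi_0(a)$ annihilates $N$ and is bounded with $\|\pi_0(a)\|\le M(a)^{1/2}$, so $\pi_0(a)$ descends to $E_0$. Finally, using (i) again I would verify $[\pi_0(a)\xi,\eta]=[\xi,\pi_0(a^*)\eta]$; this shows $\pi_0(a)$ is adjointable with $\pi_0(a)^*=U_0\pi_0(a^*)U_0^*$ and identifies $\pi_0$ as a $U_0$-representation.

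\emph{The operator $V$ and the intertwining relations.} I would put $V(x):=1_{\m A}\otimes x$. Boundedness is clear, and the identity $\langle Vy,\,a\otimes x\rangle_0=\langle y,\tau(a)x\rangle$ (again via $\tau\circ\alpha=\tau$) exhibits a formal adjoint $V^\dagger(a\otimes x)=\tau(a)x$; applying Cauchy--Schwarz to $\langle V^\dagger\xi,V^\dagger\xi\rangle=\langle VV^\dagger\xi,\xi\rangle_0$ bounds $\|V^\dagger\xi\|\le\|V\|\,\|\xi\|$, so $V^\dagger$ is well defined on $E_0$ and $V$ is adjointable with $V^*=V^\dagger$. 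Then $V^*\pi_0(a)V(x)=V^*(a\otimes x)=\tau(a)x$, giving $\tau(a)=V^*\pi_0(a)V$. The identity $V^\natural=V^*$ reduces, via $V^\natural=U_1^*V^*U_0$, to $U_1V^*=V^*U_0$, which holds because $U_1$ commutes with every $\tau(a)$ by (i). Lastly, computing $\pi_0(a)^*(b\otimes x)=U_0\pi_0(a^*)U_0^*(b\otimes x)=\alpha(a^*)b\otimes x$ yields $V^*\pi_0(a)^*\pi_0(b)V(x)=\tau(\alpha(a^*)b)x=V^*\pi_0(\alpha(a)^*b)V(x)$, the remaining covariance relation.

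\emph{Main obstacle.} The genuinely delicate points, as opposed to routine module bookkeeping, are two. First, since $\alpha$ is not assumed of order two the kernel $\tau(\alpha(a^*)b)$ is not selfadjoint, so the semi-inner product is legitimate only through the polarization argument deducing Hermitian symmetry from positivity; keeping straight where $\tau\circ\alpha=\tau$ versus the commutation $U_1\tau(a)=\tau(a)U_1$ is invoked is the crux of making $U_0$ and $V^\natural=V^*$ work. Second, adjointability of $V$ (and of $\pi_0(a)$) is not automatic in the Hilbert $C^*$-module category, so the Cauchy--Schwarz boundedness estimate for the formal adjoint is the step requiring care. I expect the verifications that $\langle\cdot,\cdot\rangle_0$ is well defined on the tensor product and that $U_0$ is genuinely surjective to be the most calculation-heavy, though conceptually straightforward, parts.
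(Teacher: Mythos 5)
Your construction is correct and follows the same GNS-type architecture as the paper --- the semi-inner product on $\m A\bigotimes_{alg}E_1$ coming from condition (ii) of Definition \ref{def21}, quotient by the null submodule and completion, $U_0(a\otimes x)=\alpha(a)\otimes U_1x$, left multiplication for the representation, and condition (iii) for its boundedness --- but it differs in two local respects worth recording. First, where you take $Vx=1\otimes x$ and $\pi_0(a)$ to be plain left multiplication by $a$, the paper takes $Vx=1\otimes U_1x+K$ and, after verifying that left multiplication $\pi_0'$ is a $U_0$-representation, twists it to $\pi_0:=\pi_0'\circ\alpha$. Since $\tau\circ\alpha=\tau$ and since the covariance relation $V^*\pi_0'(a)^*\pi_0'(b)V=V^*\pi_0'(\alpha(a)^*b)V$ is established by the paper for the \emph{untwisted} objects before twisting, your simpler normalization satisfies every conclusion of the theorem verbatim; the two twists in the paper cancel in all the final formulas, so nothing is lost. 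Second, and more substantively, your adjointability argument for $V$ is genuinely different: the paper invokes Lemma 2.8 of \cite{HJU10}, the operator-matrix inequality $(\tau(a_i^*)\tau(a_j))\leq M(\tau(\alpha(a_i^*)a_j))$, to bound $\|\sum_i\tau(a_i)U_1^*y_i\|$ by the seminorm of $\sum_i a_i\otimes y_i$, whereas your bootstrap estimate $\|V^\dagger\xi\|^2=\|\langle VV^\dagger\xi,\xi\rangle_0\|\leq\|V\|\,\|V^\dagger\xi\|\,\|\xi\|_0$ derives the same conclusion from Cauchy--Schwarz and the boundedness of $V$ alone, making this step self-contained; your polarization remark, that Hermitian symmetry of $\langle\cdot,\cdot\rangle_0$ must be deduced from positivity because the kernel $(a,b)\mapsto\tau(\alpha(a^*)b)$ is not termwise selfadjoint when $\alpha^2\neq\mathrm{id}$, is also a legitimate point that the paper passes over silently.

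One small correction: your reduction of $V^\natural=V^*$ to $V^*U_0=U_1V^*$ needs \emph{both} halves of condition (i), not only the commutation. On simple tensors, $V^*U_0(a\otimes x)=V^*(\alpha(a)\otimes U_1x)=\tau(\alpha(a))U_1x$ while $U_1V^*(a\otimes x)=U_1\tau(a)x=\tau(a)U_1x$, so you must also invoke $\tau\circ\alpha=\tau$ to identify $\tau(\alpha(a))$ with $\tau(a)$. (Conversely, your identity $\langle Vy,a\otimes x\rangle_0=\langle y,\tau(a)x\rangle$ uses only $\alpha(1)=1$, not $\tau\circ\alpha=\tau$ as you wrote.) Since both facts are contained in (i) and you flag exactly this bookkeeping as the crux elsewhere, this is an imprecision of attribution rather than a gap.
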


\begin{proof}
Let $\m{A}\bigotimes_{
{alg}}{E}_{1}$ be the algebraic tensor product of $\m{A}$ and ${E}_{1}$. Define a map $\langle\cdot,\cdot\rangle:(\m{A}\bigotimes _{
{alg}}{E}_{1})\times(\m{A}\bigotimes _{
{alg}}{E}_{1})\to \m B$ by
\[
\left\langle \sum\limits_{i=1}^{n}a_{i}\otimes {x}_{i},\sum\limits_{j=1}^{m}a'_{j}\otimes {y} _{j}\right\rangle
=\sum\limits_{i=1}^{n}\sum\limits_{j=1}^{m}\left\langle {x} _{i},\tau
\left( \alpha \left( a^{\ast }_{i}\right) a'_{j}\right) {y}
_{j}\right\rangle 
\]
for all $a_1,\ldots,a_n;~a'_1,\ldots,a'_m\in \m A$ and $ {x}_1,\ldots, {x}_n;~{y}_1,\ldots,{y}_m\in E_1$. The
condition (ii) of Definition \ref{def21} implies that $\langle~,~\rangle$ is a positive definite
semi-inner product.
Using the Cauchy-Schwarz inequality for positive-definite sesquilinear forms we observe that $K$ is a submodule of
$\m A\bigotimes_{alg} E_1$ where
$$K :=\left\{ \sum\limits_{i=1}^{n}a_{i}\otimes {x}
_{i}\in \mbox{$\m A\bigotimes_{alg} E_1$}:\sum\limits_{i,j=1}^{n}\left\langle {x}
_{i},\tau \left( \alpha ( a_{i}^{\ast }) a_{j}\right) {x}
_{j}\right\rangle =0\right\}.$$
Therefore $\langle\cdot,\cdot\rangle$ induces
naturally on the quotient module $\left({\m A\bigotimes_{alg} E_1}\right)/ K$ as a $\m B$-valued inner product. 
Henceforth we denote this induced inner-product by $\langle~,~\rangle$ itself. Assume $E_{0}$ to be
the Hilbert $\m{B}$-module obtained by the
completion of $\left(\m{A}\bigotimes _{{alg}}{E}_{1}\right)/K$. 

It is easy to check that $\left( E
_{0},\m B,U_{0}\right) $ is an S-module, where the 
unitary $U_{0}$ is defined by
\[
U_{0}\left( \sum\limits_{i=1}^{n} a_i\otimes {x_i} +K\right) =\sum\limits_{i=1}^{n}\alpha ( a_i)
\otimes U_{1}{x_i} +K~\mbox{where}~a\in \m A,{x}\in E_1.  
\]
Indeed, $U_0$ is a unitary, because for all $a,a'\in \m A$ and ${x},{y}\in E_1$ we get
\begin{eqnarray*}
 &&\left< U_{0}\left( \sum\limits_{i=1}^{n} a_i\otimes {x_i} +K\right),U_{0}\left( \sum\limits_{j=1}^{n} a_j\otimes {x_j} +K\right)\right>
 \\ &=&\sum\limits_{i,j=1}^{n}\langle \alpha \left( a_i\right)\otimes U_{1}{x_i} +K,\alpha \left( a_j\right)\otimes U_{1}{x_j} +K\rangle
=\sum\limits_{i,j=1}^{n}\langle U_1 {x_i},\tau(\alpha(\alpha(a_i)^*)\alpha(a_j))U_1 {x_j}\rangle
\\&=&\sum\limits_{i,j=1}^{n}\langle  {x_i},\tau(\alpha(a_i^*)a_j) {x_j}\rangle
= \left<  \sum\limits_{i=1}^{n} a_i\otimes {x_i} +K, \sum\limits_{j=1}^{n} a_j\otimes {x_j} +K\right>,
\end{eqnarray*}
and because $U_0$ is surjective. Since
\begin{eqnarray*}
 &&\left< U_{0}\left( \sum\limits_{i=1}^{n} a_i\otimes {x_i} +K\right),\sum\limits_{j=1}^{m} a'_j\otimes {y_j} +K\right>
 \\&=&\sum\limits_{i=1}^{n}\sum\limits_{j=1}^{m}\langle \alpha \left( a_i\right)
\otimes U_{1}{x_i} +K,a'_j\otimes {y_j} +K\rangle
=\sum\limits_{i=1}^{n}\sum\limits_{j=1}^{m}\langle U_1 {x_i},\tau(\alpha(\alpha(a_i)^*)a'_j) {y_j}\rangle
\\&=&\sum\limits_{i=1}^{n}\sum\limits_{j=1}^{m}\langle  {x_i},\tau(\alpha(a^*_i)\alpha^{-1}(a'_j)) U^*_1{y_j}\rangle
= \left< \sum\limits_{i=1}^{n} a_i\otimes {x_i} +K, \sum\limits_{j=1}^{m}\alpha^{-1}(a'_j)\otimes U^*_1{y_j} +K\right>,
\end{eqnarray*}
we obtain $U^*_{0} \left( \sum\limits_{j=1}^{m} a'_j\otimes {y_j} +K\right)=\sum\limits_{j=1}^{m}\alpha^{-1}(a'_j)\otimes U^*_1{y_j} +K$.
Define a map $V:{E}_{1}\rightarrow 
E_{0}$ by
\[
V{x}:=1\otimes U_{1}{x} +K~\mbox{where}~{x}\in E_1. 
\]
For each $x\in E_1$ we have
\begin{align*}
 \|Vx\|^2=\|\langle V{x},Vx\rangle\| &=\|\langle 1\otimes U_1{x}+K,1\otimes U_1{x}+K\rangle\|=\|\langle U_1{x},\tau(1)U_1{x}\rangle\|
 \\&\leq\|\tau(1)\|\|x\|^2.
\end{align*}
This implies that $V$ is bounded. For each $a_1,a_2,\ldots,a_n \in\m A$ and ${x},y_1,y_2,\ldots,y_n\in E_1$ we have
\begin{align}\label{eqn15}
 \left< V{x},\sum\limits_{i=1}^{n} a_i\otimes{y_i}+K\right> \nonumber &=\left< 1\otimes U_1{x}+K,\sum\limits_{i=1}^{n} a_i\otimes{y_i}+K\right>
 =\left< U_1{x},\sum\limits_{i=1}^{n}\tau(\alpha(1)a_i){y_i}\right>
 \\&=\left< {x},\sum\limits_{i=1}^{n} U^*_1\tau(a_i){y_i}\right>=\left< {x},\sum\limits_{i=1}^{n}\tau(a_i)U^*_1{y_i}\right>.
\end{align}
From Lemma 2.8 of \cite{HJU10} there exists $M>0$ such that $$(\tau(a_i^*)\tau(a_j))\leq M(\tau(\alpha(a_i^*)a_j)).$$
Indeed, for each $a_1,a_2,\ldots a_n\in\m A$ and $y_1,y_2,\ldots y_n\in E_1$ we get
\begin{align}\label{eqn16}
\left\|\sum_{i=1}^n\tau(a_i)U^*_1{y_i}\right\|^2 \nonumber &= \left\|\left< \sum_{i=1}^n\tau(a_i)U^*_1{y_i},\sum_{j=1}^n\tau(a_j)U^*_1{y_j}\right>\right\| 
\\ \nonumber&=\left\|\sum\limits_{i=1}^{n}\sum\limits_{j=1}^{n}\left< U^*_1{y_i}, \tau(a^*_i)\tau(a_j)U^*_1{y_j}\right> \right\|
\\ \nonumber&\leq M\left\|\sum_{i=1}^n\sum_{j=1}^n\left< U^*_1{y_i}, \tau(\alpha(a^*_i)a_j)U^*_1{y_j}\right>\right\|
\\ & =M \left\|\sum_{i=1}^n a_i\otimes y_i+K\right\|^2.
\end{align}
Therefore using Equation \ref{eqn15} and Inequality \ref{eqn16}, we conclude that $V$ is an adjointable map with adjoint
$$V^{\ast }\left(\sum\limits_{i=1}^{n}a_i\otimes {x_i} +K\right):=\sum\limits_{i=1}^{n}U^*_{1}\tau (a_i){x_i}
~\mbox{where $a_i\in\m A;~x_i \in {E}_{1}$ for $1\leq i\leq n$.}$$ 
For each $a_1,a_2,\ldots,a_n\in\m A;~x_1,x_2,\ldots,x_n \in {E}_{1}$ we obtain
\begin{align*}
 V^\natural\left(\sum\limits_{i=1}^{n}a_i\otimes{x_i}+K \right)&=U^*_{1}V^*U_{0}\left(\sum\limits_{i=1}^{n}a_i\otimes{x_i}+K \right)
 =U^*_{1}V^*\left(\sum\limits_{i=1}^{n}\alpha(a_i)\otimes U_1{x_i}+K \right)
 \\&=U^*_{1}\sum\limits_{i=1}^{n}\tau(\alpha(a_i))U^*_1 U_1{x_i}=U^*_{1}\sum\limits_{i=1}^{n}\tau(\alpha(a_i)){x_i}
 \\&=V^*\left(\sum\limits_{i=1}^{n}a_i\otimes{x_i}+K \right)
\end{align*}
which implies that $V^{\natural}=V^{\ast}$. Define the map $
\pi'_0:\m{A}\rightarrow \mathcal{B}^a(E_0)$ by
\begin{equation}
\pi'_0(a)\left(\sum\limits_{i=1}^n b_i\otimes {x_i} +{K}\right) =\sum\limits_{i=1}^n ab_i\otimes {x_i}
+{K}~  \label{m12}
\end{equation}
$\mbox{for all}~a,b_1,b_2,\ldots,b_n\in \m{A};x_1,x_2,\ldots,x_n \in {E}_{1}.$ We have
\begin{eqnarray*}
 &&\left\|\pi'_0(a)\left(\sum\limits_{i=1}^n a_i\otimes x_i+K\right)\right\|^2 = \left\|\sum\limits_{i=1}^n aa_i\otimes x_i+K\right\|^2
 \\ &=& \left\|\left< \sum\limits_{i=1}^n aa_i\otimes x_i+K, \sum\limits_{j=1}^n aa_j\otimes x_j+K\right>\right\|
 = \left\|\sum\limits_{i,j=1}^n\left<x_i,  \tau(\alpha(a^*_ia^*)aa_j) x_j\right>\right\|
 \\ &\leq & M(a)\left\|\sum\limits_{i,j=1}^n\left<x_i,  \tau(\alpha(a^*_i)a_j) x_j\right>\right\|
 = M(a)\left\|\left(\sum\limits_{i=1}^n a_i\otimes x_i+K\right)\right\|^2
\end{eqnarray*}
where $a,a_1,\ldots,a_n\in\m A$ and $x_1,\ldots,x_n\in E_1$. 
Thus for each $a\in\m A$, $\pi'_0(a)$ is a well-defined bounded linear operator from $E_0$ to $E_0$. Using
\begin{eqnarray*}
&&\left< \pi'_0(a)\left(\sum\limits_{i=1}^n a_i\otimes x_i+K\right), \sum\limits_{j=1}^m a'_j\otimes x'_j+K\right>
 \\&=&  \left<\sum\limits_{i=1}^n aa_i\otimes x_i+K, \sum\limits_{j=1}^m a'_j\otimes x'_j+K\right>
=  \sum\limits_{i=1}^n \sum\limits_{j=1}^m\left<x_i,  \tau(\alpha(a^*_ia^*)a'_j )x'_j\right>
\\ & = & \sum\limits_{i=1}^n \sum\limits_{j=1}^m\left<x_i,  \tau(\alpha(a^*_i)\alpha(a^*)a'_j) x'_j\right>
\\ &=& \left< \sum\limits_{i=1}^n  a_i\otimes x_i+K,  \sum\limits_{j=1}^m\alpha(a^*)a'_j \otimes x'_j+K\right>
\end{eqnarray*}
and
\begin{eqnarray*}
U_0\pi'_0(a^*)U^*_0\left(\sum\limits_{j=1}^m a'_j\otimes
x'_j+K\right) &=&
U_0\pi'_0(a^*)\left(\sum\limits_{j=1}^m\alpha^{-1}(a'_j)\otimes U^*_1
x'_j+K\right)
 \\ &=&  U_0\left(\sum\limits_{j=1}^m (a^*\alpha^{-1}(a'_j))\otimes U^*_1 x'_j+K\right)
\\ &=& \sum\limits_{j=1}^m \alpha(a^*)a'_j\otimes x'_j+K
\end{eqnarray*}
for all $a_1,\ldots,a_n,a'_1,\ldots,a'_m\in \m A$ and $x_1,\ldots,x_n,x'_1,\ldots,x'_m\in E_1$, it follows that 
$\pi'_0:\m{A}\rightarrow \mathcal{B}^a(E_0)$
is a well-defined map. Indeed, $\pi'_0:\m{A}\to \mathcal{B}^a(E_0)$ is an $U_0$-representation. Define an $U_0$-representation $\pi_0:\m A\to \mathcal{B}^a(E_0)$ by $\pi_0(a):=\pi'_0(\alpha(a))$ for all $a\in \m A$.
Since $V^{\natural}=V^{\ast },$ for all $a\in\m A$, ${x} \in {E}_{1}$ we obtain
\begin{equation*}
V^{\natural} \pi'_0(a)V{x} =V^{\ast }\left(
a\otimes U_{1} {x} +{K}\right) =U^*_{1}\tau
(a)U_{1}{x} =\tau (a){x}.
\end{equation*}
Therefore $\tau (a)=V^{\natural}\pi_0(a)V~\mbox{for all}~a\in \m{A}.$ Moreover, for each $x\in E_1$ and $a,b\in\m A$ we get
\begin{eqnarray*}
 V^*\pi'_0(a)^*\pi'_0(b)Vx &=& V^*U_0\pi'_0(a^*)U^{*}_0\pi'_0(b)Vx=V^*U_0\pi'_0(a^*)U^*_0(b\otimes U_1 x+ K)
 \\ &=& V^*U_0\pi'_0(a^*)(\alpha^{-1}(b)\otimes x+ K)
 \\ &=& V^*U_0(a^*\alpha^{-1}(b)\otimes x+ K)=V^*(\alpha(a^*\alpha^{-1}(b))\otimes U_1 x+ K)
 \\&=& U^*_1\tau(\alpha(a^*\alpha^{-1}(b)))U_1 x  =\tau(\alpha(a)^*b) x=V^*\pi'_0(\alpha(a)^*b)Vx.
\end{eqnarray*}
From this equality, it follows that 
\begin{eqnarray*}
  V^*\pi_0(a)^*\pi_0(b)V &=& V^*\pi'_0(\alpha(a))^*\pi'_0(\alpha(b))V=V^*\pi'_0(\alpha(\alpha(a))^*\alpha(b))V
  \\ &=& V^*\pi'_0(\alpha(\alpha(a)^*b))V= V^*\pi_0(\alpha(a)^*b)V
\end{eqnarray*}
for each $a,b\in\m A$.
\end{proof}

In the following theorem we extend the KSGNS construction for $\tau$-maps:

\begin{theorem}
\label{main} Assume $\m A$ to be a unital $C^*$-algebra and $\alpha:\m A\to\m A$ be an automorphism.
Suppose $\m B$ is a von Neumann algebra acting on a Hilbert space $\m H$ and ${E}$ is a Hilbert $\m{A}$-module.
Let $E_1$ be a Hilbert $\m B$-module and $E_2$ be a von Neumann $\m B$-module such that $(E_1,\m B,U_1)$ and $(E_2,\m B,U_2=%
\mathrm{id}_{E_2})$ be S-modules. If $\tau:%
\m{A}\rightarrow\mathcal{B}^a({E}_{1})$ is an $\alpha $-CP map and  $T:%
{E}\rightarrow \mathcal{B}^a({E}_{1},{E}_{2})$ is a $\tau$-map, then there exist

\begin{itemize}
\item [(i)]
\begin{enumerate}
\item [(a)] a von Neumann $\m B$-module $E_3$ with a unitary $U_3$ such that $\left( E%
_{3},\m B,U_{3}\right)$ is an S-module,
\item [(b)] an
$U_3$-representation $\pi$ of $\m{A}$ on $\left( E%
_{3},\m B,U_{3}\right)$ with a map  $V\in \ma B^a (E_1,E_3)$ such
that $V^{\natural}=V^{\ast }$,
and
$$\tau(a)=V^{*}\pi(a)V~\mbox{for all}~a\in\m A,~$$
\end{enumerate}
\item [(ii)]
\begin{enumerate}
\item [(a)] a von Neumann $\m B$-module $E_4$ which is an S-module $(E_4,\m B,U_4=id_{E_4})$ and a map $\Psi:E\to \ma B^a (E_3,E_4)$ which is a 
$\pi$-map,
\item [(b)] a coisometry $W$ from $ E_2$ onto $ E_4$ satisfying $W^{\natural}=W^{\ast }$,
 $$T(x)=W^{*}\Psi(x)V~\mbox{for all}~x\in E.$$
\end{enumerate}
\end{itemize}
\end{theorem}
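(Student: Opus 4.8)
The plan is to read off part (i) from Theorem \ref{THM1} after passing to von Neumann closures, and then to realize the second dilation of $T$ through an interior tensor product, obtaining $\Psi$ as a Kolmogorov-type $\pi$-map and $W$ as the coisometry that intertwines the two dilations. First I would apply Theorem \ref{THM1} to the $\alpha$-CP map $\tau$, which yields a Hilbert $\m B$-module $E_0$, a unitary $U_0$, a $U_0$-representation $\pi_0$, and $V\in\ma B^a(E_1,E_0)$ with $V^\natural=V^*$, $\tau(a)=V^*\pi_0(a)V$, and $V^*\pi_0(a)^*\pi_0(b)V=V^*\pi_0(\alpha(a)^*b)V$. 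Since $\m B$ is a von Neumann algebra, I would let $E_3$ be the von Neumann $\m B$-module obtained as the strong closure of $E_0$ inside $\ma B(\m H,E_0\bigotimes\m H)$; by self-duality of von Neumann modules the operators $U_0$ and $\pi_0(a)$ extend to adjointable maps $U_3$ and $\pi(a)$ on $E_3$, the identities above persist, and $V$ is read as a map into $E_3$. I would also record that $E_3=\overline{span}\,\pi(\m A)VE_1$ (immediate from the construction in Theorem \ref{THM1}, where $a\otimes x+K=\pi_0'(a)(1\otimes x+K)$ and $1\otimes x+K=V(U_1^*x)$). This establishes (i).

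For (ii) I would form the interior tensor product $E\otimes_\pi E_3$ of the Hilbert $\m A$-module $E$ with $E_3$, letting $\m A$ act on $E_3$ through $\pi$: on the algebraic tensor product put the $\m B$-valued semi-inner product $\langle x\otimes\xi,y\otimes\eta\rangle=\langle\xi,\pi(\langle x,y\rangle)\eta\rangle$, quotient by its kernel, complete, and take the von Neumann closure to obtain a von Neumann $\m B$-module $E_4$; setting $U_4=\mathrm{id}_{E_4}$ makes $(E_4,\m B,U_4)$ an S-module. I would then define $\Psi(x)\colon E_3\to E_4$ by $\Psi(x)\xi=x\otimes\xi$. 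The estimate $\|\Psi(x)\xi\|^2=\|\langle\xi,\pi(\langle x,x\rangle)\xi\rangle\|\le\|x\|^2\|\xi\|^2$ shows $\Psi(x)$ is bounded, self-duality of $E_3$ makes it adjointable, and $\langle\Psi(x)\xi,\Psi(y)\eta\rangle=\langle\xi,\pi(\langle x,y\rangle)\eta\rangle$ gives $\Psi(x)^*\Psi(y)=\pi(\langle x,y\rangle)$; thus $\Psi\colon E\to\ma B^a(E_3,E_4)$ is a $\pi$-map.

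It remains to construct $W$. On the total set $\{x\otimes Vy:x\in E,\ y\in E_1\}$ I would define $S(x\otimes Vy):=T(x)y\in E_2$. The crucial point is that $S$ is isometric: combining $V^*\pi(a)V=\tau(a)$ with the $\tau$-map identity $T(x)^*T(x')=\tau(\langle x,x'\rangle)$ gives $\langle x\otimes Vy,\,x'\otimes Vy'\rangle_{E_4}=\langle y,\tau(\langle x,x'\rangle)y'\rangle=\langle T(x)y,T(x')y'\rangle_{E_2}$. Since $x\otimes\pi(b)Vy=(xb)\otimes Vy$ and $E_3=\overline{span}\,\pi(\m A)VE_1$, the family $\{x\otimes Vy\}$ has dense span in $E_4$, so $S$ extends to an isometry $S\in\ma B^a(E_4,E_2)$ (adjointable by self-duality of $E_4$) with $S^*S=\mathrm{id}_{E_4}$. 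Putting $W:=S^*\in\ma B^a(E_2,E_4)$ we obtain $WW^*=S^*S=\mathrm{id}_{E_4}$, so $W$ is a coisometry of $E_2$ onto $E_4$; moreover $W^\natural=U_2^*W^*U_4=W^*$ because $U_2$ and $U_4$ are identities, and $W^*\Psi(x)Vy=S(x\otimes Vy)=T(x)y$, that is $T(x)=W^*\Psi(x)V$.

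The step I expect to be the main obstacle is this last one. One must check carefully that $\{x\otimes Vy\}$ is total in $E_4$, so that $S$, and hence $W^*$, is genuinely defined on all of $E_4$, and that the isometry estimate survives the quotient, the completion, and the von Neumann closure. Self-duality of the von Neumann modules $E_3$, $E_4$ and $E_2$ is what supplies the adjoints of $\Psi(x)$ and of $S$, and it is essential that $U_2$ and $U_4$ are the identity so that the $\natural$-adjoint conditions reduce to ordinary Hilbert-module adjoints.
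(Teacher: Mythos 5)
Your part (i) is essentially the paper's: apply Theorem \ref{THM1}, take $E_3$ to be the strong operator topology closure of $E_0$ in $\ma B(\m H,E_0\bigotimes\m H)$, and extend $\pi_0$ and $U_0$ by sot-limits. One slip already here: $\overline{span}\,\pi(\m A)VE_1$ equals $E_0$ (norm closure), not $E_3$; the span is only sot-dense in $E_3$, so any later density argument has to be run in the strong topology, not in norm.

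Part (ii), however, has a genuine gap that breaks the whole route. Your construction rests on the interior tensor product $E\otimes_\pi E_3$ with the form $\langle x\otimes\xi,y\otimes\eta\rangle=\langle\xi,\pi(\langle x,y\rangle)\eta\rangle$, but $\pi$ is \emph{not} a $*$-homomorphism: it is only a $U_3$-representation, and from the construction in Theorem \ref{THM1} one has $\pi(a)^*=\pi(\alpha(a^*))\neq\pi(a^*)$ for general $\alpha$ (the paper's closing Remark records precisely this, and notes that the tensor-product map $\nu$ is an isometry only in the case $\alpha=id_{\m B}$). Consequently your form is not even Hermitian: $\langle x\otimes\xi,y\otimes\eta\rangle^*=\langle\eta,\pi(\alpha(\langle y,x\rangle))\xi\rangle$ rather than $\langle\eta,\pi(\langle y,x\rangle)\xi\rangle$, hence it is not positive semidefinite on $E\otimes_{alg}E_3$, and ``quotient by its kernel, complete'' does not produce a Hilbert, let alone a von Neumann, $\m B$-module. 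The balancing identity $x\otimes\pi(b)Vy=(xb)\otimes Vy$, which you need for the totality of $\{x\otimes Vy\}$, also fails: pairing the two sides against $x'\otimes Vy'$ and using $V^*\pi(b)^*\pi(c)V=V^*\pi(\alpha(b)^*c)V$ gives $\langle y,\tau(\alpha(b)^*\langle x,x'\rangle)y'\rangle$ versus $\langle y,\tau(b^*\langle x,x'\rangle)y'\rangle$, which differ for general $\alpha$. So $E_4$, $\Psi$, and the isometry $S$ cannot be built as proposed; your plan is exactly the factorization that is available only in the CPD case $\alpha=id_{\m B}$, which is why the paper calls its Theorem \ref{fact} a \emph{partial} factorization theorem. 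The paper's proof avoids the tensor product altogether: it takes $E'_4=\overline{span}\,T(E)E_1\subseteq E_2$, defines $\Psi_0(x)$ directly on generators by $\Psi_0(x)\left(\sum_i\pi_0(a_i)Vx_i\right)=\sum_i T(xa_i)x_i$ (well defined and bounded by the same inner-product identity you computed for $S$), passes to sot-closures to get $E_4$ and $\Psi(x)$, and takes $W$ to be the orthogonal projection of $E_2$ onto the complemented von Neumann submodule $E_4$ (Theorem 5.2 of \cite{Sk00}), so that $W^*$ is the inclusion and $T(x)=W^*\Psi(x)\pi(1)V=W^*\Psi(x)V$. If you want to keep your picture of $W$ as the adjoint of an isometry, you must replace $E\otimes_\pi E_3$ by the closed span of the vectors $T(x)y$ inside $E_2$ itself --- which is precisely the paper's construction.
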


\begin{proof}
By Theorem \ref{main0} we obtain the triple $(\pi_0,V, E_0)$
associated to $\tau$ where $(E_0,\m B, U_0)$ is an S-module. Here the Hilbert $\m B$-module $E_0$ satisfies 
$\overline{\rm span}~{\pi_0(\m A)VE_1}=E_0$,
 $V\in \ma B^a (E_1,E_0)$, and $\pi_0$ is an $U_0$-representation of $\m A$ to $\ma B^a (E_0)$ such that
$$\tau(a)=V^{*}\pi_0(a)V~\mbox{for all}~a\in\m A.~$$
We obtain a von Neumann $\m B$-module $E_3$ by taking the strong operator topology closure of
$E_0$ in $\ma B(\m H,E_0\bigotimes\m  H)$. Consider the element of $ \ma B^a(E_1, E_3)$ which gives the 
same value as $V$ when evaluated on the elements of $E_1$, because $E_0$ is canonically embedded in $E_3$. 
We denote this element of $ \ma B^a(E_1, E_3)$ by $V$. 
Fix $\displaystyle\lim_{\alpha} x^0_{\alpha}\in E_3$ with $x^0_{\alpha}\in E_0$. It is easy to check that 
$\mbox{sot-}\displaystyle\lim_{\alpha} \pi_0(a)x^0_{\alpha}$ exists for each $a\in \m A$.
The $U_0$-representation
$\pi_0:\m A\to \ma B^a(E_0)$ extends to a representation of $\m A$ on
$E_3$ as follows:
\[
 \pi(a)(x):=\mbox{sot-}\displaystyle\lim_{\alpha} \pi_0(a)x^0_{\alpha}~\mbox{where $a\in \m A$,
 $x$=sot-$\displaystyle\lim_{\alpha} x^0_{\alpha}\in E_3$ with $x^0_{\alpha}\in E_0$.}
\]
For each $a\in \m A$, $x$=sot-$\displaystyle\lim_{\alpha} x^0_{\alpha}$ and
$y$=sot-$\displaystyle\lim_{\beta} y^0_{\beta}\in E_3$ with
$x^0_{\alpha},y^0_{\beta}\in E_0$ we have
\begin{align*}
 \langle \pi(a) x,y\rangle &=\mbox{sot-}\displaystyle\lim_{\beta}\langle \pi(a) x,y^0_{\beta}\rangle
 =\mbox{sot-}\displaystyle\lim_{\beta}(\mbox{sot-}\displaystyle\lim_{\alpha}\langle y^0_{\beta},\pi_0(a) x^0_{\alpha}\rangle)^*
 \\ &=\mbox{sot-}\displaystyle\lim_{\beta}(\mbox{sot-}\displaystyle\lim_{\alpha}\langle \pi_0(a)^*y^0_{\beta}, x^0_{\alpha}\rangle)^*
 = \langle  x,\pi(a^*)y\rangle,
\end{align*}
i.e., $\pi(a)\in \ma B^a(E_3)$ for each $a\in\m A$. Let $U_3: E_3\to E_3$ be a map defined by
\[
 U_3(x):=\mbox{sot-}\displaystyle\lim_{\alpha} U_0(x^0_{\alpha}))~\mbox{where
 $x$=sot-$\displaystyle\lim_{\alpha} x^0_{\alpha}\in E_3$ with $x^0_{\alpha}\in E_0$.}
\]
It is easy to observe that $U_3$ is a unitary, $(E_3,\m B, U_3)$ is an S-module and the triple $(\pi, V, E_3)$ 
satisfies all the conditions of the statement
(i).

Let $E'_4$ be the Hilbert $\ma{B}$-module $\overline{\rm span}~ T \left( {E}\right) {E}_{1}%
$. For each $x\in {E}$, define a map $\Psi_0( x)
:E_0\rightarrow E'_4$
 by
\begin{equation}
\Psi_0\left( x\right) \left( \sum\limits_{i=1}^{n}\pi_0\left(
a_{i}\right) V{x} _{i}\right) =\sum\limits_{i=1}^{n}T \left(
xa_{i}\right) {x}_{i}  \label{m5}
\end{equation}%
for all $a_1,a_2,\ldots,a_n\in\m A$ and $x_1,x_2,\ldots,x_n\in E_1$.
Each $\Psi_0(x)$ is a bounded right $\m B$-linear map from $E_0$ to $E'_4$. Indeed, we have
\begin{align}\label{eqn6}\nonumber
& \left<\Psi_0\left( x\right) \left( \sum\limits_{i=1}^{n}\pi_0\left(
a_{i}\right) V{x} _{i}\right),\Psi_0\left( x\right) \left( \sum\limits_{j=1}^{m}\pi_0\left(
a'_{j}\right) V{x}'_{j}\right)\right>
\\&\nonumber=\left< \sum\limits_{i=1}^{n}T \left( xa_{i}\right) {x}
_{i},\sum\limits_{j=1}^{m}T \left( ya'_{j}\right) {x}' _{j}\right>
= \sum\limits_{i=1}^{n}\sum\limits_{j=1}^{m}\left< {x}
_{i},T \left( xa_{i}\right) ^*T \left( ya'_{j}\right)
{x}' _{j}
\right>
\\& \nonumber= \sum\limits_{i=1}^{n}\sum\limits_{j=1}^{m}\left< {x}
_{i},\tau(\langle  xa_{i}, ya'_{j}\rangle)
{x}' _{j}
\right>
= \sum\limits_{i=1}^{n}\sum\limits_{j=1}^{m}\left\langle {x}
_{i},V^*\pi_0\left( a^*_i \left\langle x,y\right\rangle a'_j\right) V  {x}'
_{j}\right\rangle\\
& \nonumber= \sum\limits_{i=1}^{n}\sum\limits_{j=1}^{m}\left\langle {x}
_{i},V^*\pi_0( a_i)^* \pi_0(\left\langle x,y\right\rangle a'_j) V  {x}'
_{j}\right\rangle
\\
& = \left\langle \sum\limits_{i=1}^{n}\pi_0\left( a_{i}\right) V{x}
_{i},\pi_0\left( \left\langle x,y\right\rangle \right)
\sum\limits_{j=1}^{m}\pi_0\left( a'_{j}\right) V{x}'
_{j}\right\rangle
\end{align}%
for all $x,y\in E$; and 
$a_i,a'_j\in\m A$ and
$x_i,x'_j\in E_1$ for $1\leq i\leq n,$ $1\leq j\leq m$.  We
denote by $E_4$ the strong operator topology closure of ${E}'_{4}$ in
$\ma B(\m H, {E}'_{4}\bigotimes \m H)$. For $x\in E$ define a mapping $\Psi(x):E_3\to
E_4$ by
\[
\Psi(x)(z):=\mbox{sot-}\lim_{\alpha} \Psi_0 (x)
z^0_{\alpha}~\mbox{where $z$=sot-$\displaystyle\lim_{\alpha}
z^0_{\alpha}\in E_3$ for $z^0_{\alpha}\in E_0$.}~
\]
Note that the limit $\mbox{sot-}\lim_{\alpha} \Psi_0 (x)
z^0_{\alpha}$ exists. For all
$z$=sot-$\displaystyle\lim_{\alpha} z^0_{\alpha}\in E_3$ with $z^0_{\alpha}
\in E_0$ and $x,y\in E$ we have
\begin{align*}
\langle \Psi(x) z,\Psi(y) z\rangle &=
\mbox{sot-}\lim_{\alpha}\{\mbox{sot-}\lim_{\beta}\langle
\Psi_0(y)z^0_{\alpha}, \Psi_0(x)z^0_{\beta}\rangle\}^*
\\ &=\mbox{sot-}\lim_{\alpha}\{\mbox{sot-}\lim_{\beta}\langle
z^0_{\alpha}, \pi_0(\langle x,y\rangle)z^0_{\beta}\rangle\}^*
=\langle
z,\pi(\langle x,y\rangle)z\rangle.
\end{align*}
Since $E_3$ is a von Neumann $\m B$-module, we conclude that
$\Psi:E\to \ma B^a (E_3, E_4)$ is a $\pi$-map. Because $E_4$
is a von Neumann $\m B$-submodule of $ E_2$, we get an
orthogonal projection
 from $ E_2$ onto $ E_4$ (cf. Theorem 5.2 of \cite{Sk00}) which we denote by $W$. 
 Therefore $W^*$ is the inclusion map from $ E_4$ to $ E_2$, and hence
 $WW^*=id_{E_4}$, i.e., $W$ is a coisometry. Considering $E_4$ 
 as an S-module 
 $(E_4,\m B,U_4=id_{E_4})$ it is evident that $W^\natural (x)=U^*_2W^*U_4(x)=W^*(x)$ for all $x\in E_2$.
 Eventually
\begin{eqnarray*}
W^*\Psi(x) V=\Psi(x)V=\Psi(x)(\pi(1)V)=T(x)~\mbox{for all}~x\in
E.\qedhere
\end{eqnarray*}
\end{proof}

\section{A partial factorization theorem for $\mf{K}$-families}\label{secK3}

Suppose $\m B$ and $\m C$ are unital $C^*$-algebras. We denote the set of all bounded linear maps from  $\m B$ to $\m C$ by $\ma
B(\m B,\m C)$. Let
$\alpha$ be an automorphism on $\m B$, i.e., $\alpha:\m B\to \m B$ is a bijective unital $*$-homomorphism. For a set $\Omega$, by a {\it kernel} 
$\mf K$ over $\Omega$ from $\m B$ to $\m C$ we mean a function $\mf{K}:\Omega\times \Omega \to \ma B(\m B,\m C)$, 
and $\mf K$ is called {\it Hermitian} if $\mf{K}^{\sigma, \sigma'}(b^*)=\mf{K}^{\sigma', \sigma}(b)^*$ for all $\sigma,\sigma'\in \Omega$ and $b\in\m B$.
We say that a Hermitian kernel $\mf K$ over $\Omega$ from $\m B$ to $\m C$ is an {\it $\alpha$-completely positive definite kernel} or
an {\it $\alpha$-CPD-kernel} over $\Omega$ from $\m B$ to $\m C$ if for finite choices $\sigma_i\in \Omega$, $b_i\in \m B$, $c_i\in \m C$ we have
\begin{itemize}
 \item [(i)] $ \sum_{i,j} c^*_i \mf{K}^{\sigma_i, \sigma_j} (\alpha(b_i)^* b_j) c_j \geq 0, $
 \item [(ii)] $ \mf{K}^{\sigma_i, \sigma_j} (\alpha(b))= 
         \mf{K}^{\sigma_i, \sigma_j}(b) $ for all $b\in\m B$,
 \item [(iii)] for each $b\in\m B$ there exists $M(b)>0$ such that 
\begin{eqnarray*}
\left\|\sum\limits_{i,j=1}^n c^*_i  \mf{K}^{\sigma_i,\sigma_j}(\alpha(b^*_ib^*)bb_j) c_j\right\|
 &\leq & M(b)\left\|\sum\limits_{i,j=1}^n   c^*_i \mf{K}^{\sigma_i,\sigma_j}(\alpha(b^*_i)b_j) c_j\right\|.
\end{eqnarray*}
\end{itemize}

In this section we discuss the decomposition of $\mf K$-families for an $\alpha$-CPD kernel in terms of 
reproducing kernel S-correspondences which is defined as follows:

\begin{definition}
 Let $\m A$ and $\m{B}$ be unital $C^{\ast}$-algebras. An S-module $(\m F,\m B,U)$ is called an {\rm S-correspondence over $\Omega$ from $\m A$ 
 to $\m B$} if there exists a $U$-representation $\pi$ of $\m{A}$ on $(\m F,\m B,U)$, i.e., $\m F$ is also a left $\m A$-module with 
 \[
  af:=\pi(a)f~\mbox{for all}~a\in\m A,f\in \m F.
 \]
 Let $\Omega$ be a set. If $(\m F,\m B,U)$ is an S-correspondence from $\m A$ to $\m B$, consisting of functions from $\Omega\times\m A$ to $\m B$, 
 which forms a vector space with point-wise vector space operations, and for each $\sigma\in \Omega$ there exists an element $k_{\sigma}$ in
 $\m F$ called {\rm kernel element} satisfying 
 \[
  f(\sigma,a)=\langle k_{\sigma},af\rangle~\mbox{for all}~a\in\m A,~f\in \m F,
 \]
 then this S-correspondence is called a {\rm reproducing kernel S-correspondence} over $\Omega$ from $\m A$ to $\m B$.
The mapping $\mf K:\Omega\times \Omega\to \ma B(\m A,\m B)$ defined by
\[
 \mf K^{\sigma,\sigma'}(a)=k_{\sigma'}(\sigma,a)~\mbox{for all}~a\in\m A,~\sigma'\in \Omega 
\]
is called the {\rm reproducing kernel} for the reproducing kernel S-correspondence.
\end{definition}

In Theorem 3.1 of \cite{TDT13}, Bhattacharyya, Dritschel and Todd proved that a kernel $\mf{K}$ is dominated by a CPD-kernel 
if and only if  $\mf{K}$ has a Kolmogorov decomposition in which the module forms a Krein $C^*$-correspondence.  
Skeide's factorization theorem for $\tau$-maps \cite{Sk12} is based on the Paschke's GNS construction (cf. Theorem 5.2, \cite{Pas73}) for 
CP map $\tau$. Using the Kolmogorov decomposition we proved
a factorization theorem for $\mf{K}$-families in Theorem 2.2 of \cite{DH14} when $\mf{K}$ is a CPD-kernel. In Theorem 3.5 of \cite{BBQH09}, 
a characterization of a CPD-kernel in terms of reproducing kernel $C^*$-correspondences was obtained (also see Theorem 3.2, \cite{H08}). 

\begin{theorem}\label{fact}
  Let $ \mf{K}$ be a Hermitian kernel over a set
$\Omega$ from a unital $C^*$-algebra $\m B$ to a unital $C^*$-algebra $\m C$. Assume $\alpha$ to be an automorphism on $\m B$. 
 Let $\ma K^{\sigma}$ be a map from Hilbert $\m B$-module $E$ to Hilbert $\m C$-module $F$, for each $\sigma\in \Omega$. 
 Then the following statements are equivalent:
 \begin{itemize}
  \item [(i)] The family $\{\ma K^{\sigma}\}_{\sigma\in \Omega}$ is a $\mf{K}$-family where $\mf{K}$ is an $\alpha$-CPD-kernel.
  \item [(ii)] $\mf{K}$ is the reproducing kernel for an reproducing kernel S-correspondence
  $\m F=\m F(\mf K)$ over $\Omega$ from $\m B$ to $\m C$, i.e., there is an S-correspondence
  $\m F=\m F(\mf K)$ whose elements are $\m C$-valued functions on $\Omega\times\m B$ such that the function 
  $$k_{\sigma'}(\sigma,b):=\mf K^{\sigma,\sigma'}(b)\in\m F(\mf K)~\mbox{for all}~\sigma,\sigma'\in \Omega;~b\in\m B$$ and 
  has the reproducing property 
  \[
   \langle k_{\sigma},bf\rangle=\langle \alpha(b^*)k_{\sigma},f\rangle=f(\sigma,b)~\mbox{for all}~\sigma\in \Omega,~f\in \m F(\mf K),~b\in\m B
  \]
where $bk_{\sigma}\in \m F$ is given by
\[
 (bk_{\sigma})(\sigma',b'):=\mf K^{\sigma,\sigma'}(b'b)~\mbox{for all}~b'\in \m B.
\]
 Further,  \begin{eqnarray}\label{eqn13}
\langle \ma K^\sigma(xb)c,\ma K^{\sigma'}(x'b')c'\rangle
=  \langle \alpha( b){k_{\sigma}}c , \langle x, x'\rangle b'{k_{\sigma'}} c' \rangle
 \end{eqnarray}
for each $b,b'\in \m B;~c,c'\in \m C;~x,x'\in E;~\sigma,\sigma'\in \Omega$.
 \end{itemize}
 \end{theorem}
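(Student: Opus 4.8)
The plan is to prove the equivalence by manufacturing, out of the $\alpha$-CPD-kernel $\mf K$, a concrete reproducing kernel S-correspondence $\m F=\m F(\mf K)$ by a Kolmogorov/GNS-type construction modelled on Theorem \ref{main0}, and then to read the $\mf K$-family data off it through Equation \ref{eqn13}. The forward implication (i)$\Rightarrow$(ii) is the substantial one: it builds the whole reproducing structure. The converse (ii)$\Rightarrow$(i) will then amount to reading the three $\alpha$-CPD axioms and the $\mf K$-family identity directly off the completed module.

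For (i)$\Rightarrow$(ii), I first form the algebraic right $\m C$-module $\m F_0$ spanned by formal symbols $b\,k_{\sigma}c$ with $b\in\m B$, $\sigma\in\Omega$, $c\in\m C$, and equip it with the $\m C$-valued sesquilinear form determined by $\langle b\,k_{\sigma},b'k_{\sigma'}\rangle:=\mf K^{\sigma,\sigma'}(\alpha(b)^*b')$, extended over $\m C$. Positive semidefiniteness of this form is \emph{exactly} axiom (i) of an $\alpha$-CPD-kernel, since $\langle\sum_i b_ik_{\sigma_i}c_i,\sum_j b_jk_{\sigma_j}c_j\rangle=\sum_{i,j}c_i^*\mf K^{\sigma_i,\sigma_j}(\alpha(b_i)^*b_j)c_j\ge 0$. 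After quotienting by the length-zero vectors $N=\{\xi:\langle\xi,\xi\rangle=0\}$ (a submodule by Cauchy--Schwarz) and completing, I obtain a Hilbert $\m C$-module $\m F$. I then install its left action $\pi(a)(b\,k_{\sigma}):=(ab)k_{\sigma}$, which is well defined on the quotient and bounded because axiom (iii) supplies, for each $a$, a constant $M(a)$ dominating $\langle\pi(a)\xi,\pi(a)\xi\rangle$ by $M(a)\langle\xi,\xi\rangle$; and I define $U(b\,k_{\sigma}):=\alpha(b)k_{\sigma}$, which is isometric precisely because axiom (ii) ($\mf K^{\sigma,\sigma'}\circ\alpha=\mf K^{\sigma,\sigma'}$) forces $\mf K^{\sigma,\sigma'}(\alpha(\alpha(b))^*\alpha(b'))=\mf K^{\sigma,\sigma'}(\alpha(b)^*b')$, and surjective since $\alpha$ is bijective. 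A short computation gives $U\pi(a)U^*=\pi(\alpha(a))$, equivalently $\pi(a^*)=U^*\pi(a)^*U=\pi(a)^{\natural}$, so $\pi$ is a $U$-representation and $(\m F,\m C,U)$ is an S-correspondence over $\Omega$ from $\m B$ to $\m C$.

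Next I realize $\m F$ as functions: send an abstract $f\in\m F$ to $(\sigma,b)\mapsto\langle k_{\sigma},bf\rangle$ with $k_{\sigma}:=1\cdot k_{\sigma}$. This map is injective because the vectors $\alpha(b^*)k_{\sigma}$ form a total set, and it encodes precisely the reproducing property $\langle k_{\sigma},bf\rangle=\langle\alpha(b^*)k_{\sigma},f\rangle=f(\sigma,b)$, the middle equality being the $U$-representation identity $\pi(b)^*=\pi(\alpha(b^*))$; evaluating on $f=k_{\sigma'}$ returns $k_{\sigma'}(\sigma,b)=\mf K^{\sigma,\sigma'}(b)$, so $\mf K$ is the reproducing kernel. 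Equation \ref{eqn13} is then obtained by expanding its left side through the $\mf K$-family hypothesis, $\langle\ma K^{\sigma}(xb)c,\ma K^{\sigma'}(x'b')c'\rangle=c^*\mf K^{\sigma,\sigma'}(b^*\langle x,x'\rangle b')c'$, expanding its right side through the inner-product formula on $\m F$, and matching the two using $\alpha$-invariance (ii) and the Hermitian symmetry of $\mf K$. For the converse (ii)$\Rightarrow$(i): axiom (i) is just $\langle\xi,\xi\rangle\ge 0$ for $\xi=\sum_i b_ik_{\sigma_i}c_i$ in the genuine Hilbert $\m C$-module $\m F$; axiom (ii) is the isometry of $U$ combined with the reproducing property; axiom (iii) is the boundedness of the adjointable operator $\pi(b)$; and the $\mf K$-family identity $\langle\ma K^{\sigma}(x),\ma K^{\sigma'}(x')\rangle=\mf K^{\sigma,\sigma'}(\langle x,x'\rangle)$ drops out of Equation \ref{eqn13} upon setting $b=b'=1$, $c=c'=1$ and applying the reproducing property to $\langle k_{\sigma},\langle x,x'\rangle k_{\sigma'}\rangle$.

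I expect the main obstacle to be the $\alpha$-bookkeeping: the inner product, the unitary $U$, the left action $\pi$, and the reproducing property must be defined with mutually consistent placements of $\alpha$ and $\alpha^{-1}$ so that positivity coincides with axiom (i), $U$ is isometric by axiom (ii), and Equation \ref{eqn13} emerges with no residual twist. The naive left action feeds a surplus power of $\alpha$ into the first slot of \ref{eqn13}, and because axiom (ii) only lets one apply $\alpha$ to a \emph{whole} argument it cannot be stripped off a single factor; reconciling this is exactly where a compensating twist of the representation is forced, in the same spirit as the passage $\pi_0(a)=\pi_0'(\alpha(a))$ in the proof of Theorem \ref{main0}. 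Getting these four ingredients to fit simultaneously is the delicate, error-prone step; the remaining verifications (well-definedness on the quotient, completeness, and boundedness) are routine.
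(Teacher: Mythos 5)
Your proposal is correct and follows essentially the same route as the paper: the same Kolmogorov/GNS-type construction with the form $\langle b k_{\sigma}c, b' k_{\sigma'}c'\rangle = c^*\mf K^{\sigma,\sigma'}(\alpha(b)^*b')c'$ (positivity exactly axiom (i)), the same unitary $U(bk_{\sigma})=\alpha(b)k_{\sigma}$ isometric via axiom (ii), the same left action bounded via axiom (iii) with $\pi(b)^*=\pi(\alpha(b^*))$, and the same reading-off of the axioms and the $\mf K$-family identity in the converse; the only cosmetic difference is that you quotient formal symbols and then embed the completion into functions, whereas the paper takes $\m F_0$ to be a concrete space of $\m C$-valued functions on $\Omega\times\m B$ from the start and instead verifies that the form is representation-independent and nondegenerate pointwise (its Equation \ref{eqn11} plus the Cauchy--Schwarz argument). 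One small note: the ``compensating twist'' you anticipate in your last paragraph (in the spirit of $\pi_0=\pi'_0\circ\alpha$ from Theorem \ref{main0}) is not actually needed here --- the paper keeps the untwisted action $\pi(b)(b'k_{\sigma'})=(bb')k_{\sigma'}$, and the surplus $\alpha$ is simply retained in the statement itself, appearing as $\alpha(b)$ in the first slot of Equation \ref{eqn13} and in the fact that $\pi$ is not $*$-preserving (as the paper's closing Remark records).
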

\begin{proof}

Suppose (ii) is given.  Thus from the reproducing property it follows that 
\begin{align*}
  \sum_{i,j} c^*_i \mf{K}^{\sigma_i, \sigma_j} (\alpha(b^*_i) b_j) c_j
 &=\sum_{i,j} c^*_i k_{\sigma_j}(\sigma_i, \alpha(b^*_i) b_j)c_j=\sum_{i,j} c^*_i \langle k_{\sigma_i}, \alpha(b^*_i) b_j
k_{\sigma_j}\rangle  c_j  
\\ &= \left<\sum_{i}b_ik_{\sigma_i}c_i
, \sum_{j} b_j k_{\sigma_j}c_j\right>
 \geq 0
\end{align*}
for all finite choices of $\sigma_i\in \Omega$, $b_i\in \m B$, $c_i\in \m C$. Further, for all $b\in\m B$ and $\sigma,\sigma'\in \Omega$ we get
\begin{align*}
 \mf{K}^{\sigma, \sigma'}(\alpha(b))
 &=k_{\sigma'}(\sigma,\alpha(b))
 =\langle k_{\sigma},\alpha(b) k_{\sigma'}\rangle=\langle b^* k_{\sigma},k_{\sigma'}\rangle
\\ &=(\langle k_{\sigma'},b^* k_{\sigma}\rangle)^*=k_{\sigma}(\sigma',b^*)^*=\mf{K}^{\sigma', \sigma}(b^*)^*=\mf{K}^{\sigma, \sigma'}(b).
\end{align*}
Finally, for a fixed $b\in \m B$ and each finite choices $\sigma_i\in \Omega$, $b_i\in \m B$, $c_i\in \m C$ we obtain
\begin{align*}
 &\left\|\sum\limits_{i,j=1}^n c^*_i  \mf{K}^{\sigma_i,\sigma_j}(\alpha(b^*_ib^*)bb_j) c_j\right\|
 =\left\|\sum\limits_{i,j=1}^n c^*_i  k_{\sigma_j}(\sigma_i,\alpha(b^*_ib^*)bb_j) c_j\right\|
  \\&=\left\|\sum\limits_{i,j=1}^n c^*_i  \langle k_{\sigma_i},(\alpha(b^*_ib^*)bb_j) k_{\sigma_j}\rangle c_j\right\|
=\left\|\left< \sum\limits_{i=1}^n  b_ik_{\sigma_i}c_i,\alpha(b)^*b\left(\sum\limits_{j=1}^n b_j k_{\sigma_j}c_j\right)\right> \right\|
     \\&\leq\|\alpha(b)^*b\|\left\| \sum\limits_{i=1}^n  b_ik_{\sigma_i}c_i \right\|^2
       \leq\|b\|^2\left\|\left< \sum\limits_{i=1}^n  b_ik_{\sigma_i}c_i,\sum\limits_{j=1}^n b_j k_{\sigma_j}c_j\right> \right\|
       \\&=\|b\|^2\left\|\sum\limits_{i,j=1}^n   c^*_i \mf{K}^{\sigma_i,\sigma_j}(\alpha(b^*_i)b_j) c_j\right\|.
\end{align*}
Thus the function $\mf{K}$ is an $\alpha$-CPD-kernel.
On the other hand, for each $x,x' \in E;~\sigma,\sigma'\in \Omega$ we obtain
   \begin{align*}
    \langle \ma K^{\sigma} (x),\ma K^{{\sigma}'}(x')\rangle
&=\langle  k_{\sigma},(\langle x,x'\rangle)k_{\sigma'} \rangle
      \\&= k_{\sigma'}(\sigma, \langle x,x'\rangle)
      =\mf{K}^{\sigma,\sigma'}(\langle x, x'\rangle).
\end{align*}
So $\{\ma K^{\sigma}\}_{\sigma\in \Omega}$ is a $\mf{K}$-family, i.e., (i) holds.

Conversely, suppose (i) is given. For each $\sigma'\in \Omega$ let ${k_{\sigma'}}:\Omega\times\m B\to\m C$
be a map defined by ${k_{\sigma'}}(\sigma,b):=\mf{K}^{\sigma,\sigma'}(b)$ where $\sigma\in \Omega,~b\in\m B$. Let us define the mapping 
$b{k_{\sigma'}}$ by
$(\sigma,b')\mapsto \mf{K}^{\sigma,\sigma'}(b'b)=k_{\sigma'}(\sigma,b'b)$  where $\sigma,\sigma'\in \Omega$ and $b,b'\in\m B$. For fixed $c\in \m C$ 
we define the function 
${k_{\sigma'}}c$ by
 $(\sigma,b)\mapsto \mf{K}^{\sigma,\sigma'}(b)c=k_{\sigma'}(\sigma,b)c$ for all $\sigma,\sigma'\in \Omega$ and $b\in\m B$. 
 In a canonical way define $(bk_{\sigma})c$ and $b(k_{\sigma}c)$ for all $\sigma\in \Omega,$ $b\in\m B,$ and $c\in\m C$.  
 Let $\m F_0$ be the right $\m C$-module generated by the set $\{bk_{\sigma}:b\in\m B,~\sigma\in \Omega\}$ consisting of $\m C$-valued
 functions on $\Omega\times\m B$, i.e.,  
 $\m F_0=\{\sum^m_{j=1} (b_jk_{\sigma_j}) c_j:b_1,\ldots,b_m\in\m B; 
 c_1,\ldots,c_m\in\m C;\sigma_1,\ldots,\sigma_m\in \Omega;m\in\mathbb{N} \}.$
 Note that $(bk_{\sigma})c=b(k_{\sigma}c)$ for all $\sigma\in \Omega,$ $b\in\m B,$ and $c\in\m C$ and hence we write
 $\m F_0=\{\sum^m_{j=1} b_jk_{\sigma_j} c_j:b_1,\ldots,b_m\in\m B; 
  c_1,\ldots,c_m\in\m C;\sigma_1,\ldots,\sigma_m\in \Omega;m\in\mathbb{N} \}.$ Define a map $\langle\cdot,\cdot\rangle:\m F_0\times\m F_0\to\m C$ by
\begin{align}\label{eqn12}
  \left<f ,g \right>:=
  \sum^m_{j=1}\sum^n_{i=1} c^*_j\mf{K}^{\sigma_j,\sigma^{\prime}_i}(\alpha(b_j)^*b'_i)c'_i
\end{align}
where $f=\sum^m_{j=1} b_jk_{\sigma_j} c_j$, $g=\sum^n_{i=1} b'_ik_{\sigma'_i} c'_i\in \m F_0$.
Further with $f=\sum^m_{j=1}  b_jk_{\sigma_j} c_j$ and $g=\sum^n_{i=1} b'_ik_{\sigma'_i} c'_i$ in $\m F_0$, we obtain
\begin{eqnarray}\label{eqn11}\nonumber   
        &&\sum^m_{j=1} c^*_jg(\sigma_j,\alpha(b_j)^*)            
    \\&=& \sum^m_{j=1}\sum^n_{i=1} c^*_jb'_ik_{\sigma^{\prime}_i}(\sigma_j,\alpha(b_j)^*)c'_i
    = \sum^m_{j=1}\sum^n_{i=1} c^*_jk_{\sigma^{\prime}_i}(\sigma_j,\alpha(b_j)^*b'_i)c'_i
   \nonumber \\ &=&\sum^m_{j=1}\sum^n_{i=1} c^*_j\mf{K}^{\sigma_j,\sigma^{\prime}_i}(\alpha(b_j)^*b'_i)c'_i=\sum^m_{j=1}\sum^n_{i=1} c^*_j\mf{K}^{\sigma_j,\sigma^{\prime}_i}(b_j^*\alpha^{-1}(b'_i))c'_i
 \nonumber \\& =& \sum^m_{j=1} c^*_j\sum^n_{i=1} \mf{K}^{\sigma^{\prime}_i,\sigma_j}(\alpha^{-1}(b^{\prime}_i)^*b_j)^*c^{\prime}_i
  \nonumber= \sum^m_{j=1} c^*_j\sum^n_{i=1} k_{\sigma_j}(\sigma^{\prime}_i,\alpha^{-1}(b^{\prime}_i)^*b_j)^*c^{\prime}_i
      \\ \nonumber&=& \sum^m_{j=1} c^*_j\sum^n_{i=1} (b_jk_{\sigma_j}(\sigma^{\prime}_i,\alpha^{-1}(b^{\prime}_i)^*))^*c^{\prime}_i
           \nonumber=\sum^n_{i=1} \left(\sum^m_{j=1}  b_j k_{\sigma_j}(\sigma^{\prime}_i,\alpha^{-1}(b^{\prime}_i)^*)c_j\right)c^{\prime}_i
                        \\&=&\sum^n_{i=1}f(\sigma^{\prime}_i,\alpha^{-1}(b^{\prime}_i)^*)c^{\prime}_i.
  \end{eqnarray}
Thus the function $\langle\cdot,\cdot\rangle$ defined above does not depend on the representations chosen for $f$ and $g$.
Since $\mf{K}$ is an $\alpha$-CPD-kernel, 
\[
  \left<\sum^m_{j=1} b_jk_{\sigma_j} c_j,  \sum^m_{i=1} b_ik_{\sigma_i} c_i\right>=
  \sum^m_{=1}\sum^m_{i=1} c^*_j\mf{K}^{\sigma_j,\sigma_i}(\alpha(b_j)^*b_i)c_i\geq 0.
 \]
Therefore the map $\langle\cdot,\cdot\rangle$ is positive definite. 
For $f:=\sum^m_{j=1} b_jk_{\sigma_j} c_j$ $\in\m F_0,$ $b\in\m B,~ c\in\m C$ and $\sigma\in \Omega$,  Equations \ref{eqn12} and \ref{eqn11}, 
and the Cauchy-Schwarz 
inequality gives 
\[\|f(\sigma,b)c\|^2=\|\langle f,\alpha(b)^*{k_{\sigma}}c\rangle\|^2\leq \|\langle\alpha(b)^*{k_{\sigma}}c,\alpha(b)^*{k_{\sigma}}c\rangle\|\|
\langle f,f\rangle\| .\]
So $f\in \m F_0$ vanishes pointwise if $\langle f,f\rangle=0$. This implies that $\m F_0$ is a right inner-product $\m C$-module 
with respect to $\langle\cdot,\cdot\rangle$. 
Let $\m F$ be the completion of $\m F_0$. 
It is easy to observe that the linear map $f\mapsto ((\sigma,b)\mapsto\langle\alpha(b^*)k_{\sigma},f\rangle)$, from $\m F$ to the set of all 
functions from $\Omega\times\m B$ to $\m C$, is injective. Therefore we identify $\m F$ as a subspace of the set of all 
functions from $\Omega\times\m B$ to $\m C$.

If $\sum^m_{j=1}  b_jk_{\sigma_j} c_j$ and $\sum^n_{i=1} b'_ik_{\sigma'_i} c'_i$ are elements of $\m F_0$, then we get 
\begin{eqnarray}\label{eqn14}
 \left<\sum^m_{j=1}  b_jk_{\sigma_j} c_j, \sum^n_{i=1} b'_ik_{\sigma'_i} c'_i\right>
 \nonumber &=&\sum^m_{j=1}\sum^n_{i=1}c^*_j\mf{K}^{\sigma_j,\sigma'_i} (\alpha(b_j)^*b'_i)c'_i
= \sum^m_{j=1}\sum^n_{i=1}c^*_j\mf{K}^{\sigma_j,\sigma'_i} (\alpha(\alpha(b^*_j)b'_i))c'_i
  \\&=& \left< \sum^m_{j=1}  \alpha(b_j)k_{\sigma_j} c_j, \sum^n_{i=1} \alpha(b'_i)k_{\sigma'_i}c'_i\right>.
\end{eqnarray}
Therefore we get an isometry $U:\m F\to\m F$ by $\sum^n_{i=1} b_i{k_{\sigma_i}}c_i\mapsto \sum^n_{i=1}\alpha(b_i){k_{\sigma_i}}c_i.$ Moreover, 
from Equation \ref{eqn14}, it is easy to check that $U$ is a unitary with the adjoint  $U^*:\m F\to\m F$ defined by 
$\sum^n_{i=1} b_i{k_{\sigma_i}}c_i\mapsto \sum^n_{i=1}\alpha^{-1}(b_i){k_{\sigma_i}}c_i.$ We define a sesquilinear form
$[\cdot,\cdot]:\m F\times\m F\to\m C$ as follows:
\[ [f,f']:=\langle f,Uf'\rangle
 \]
where $f,f'\in \m F$. Indeed, for $\sum^m_{j=1} b_jk_{\sigma_j} c_j$, $\sum^n_{i=1} b'_ik_{\sigma'_i} c'_i\in \m F$ we obtain

\begin{eqnarray*}
 \left[\sum^m_{j=1} b_jk_{\sigma_j}c_j ,\sum^n_{i=1} b'_ik_{\sigma'_i} c'_i\right] =  \left<\sum^m_{j=1} b_j k_{\sigma_j} c_j,\sum^n_{i=1} 
 \alpha(b'_i)k_{\sigma'_i} c'_i\right> .
\end{eqnarray*}
For each $b\in \m B$ define $\pi(b):\m F\to\m F$ by 
$$\pi(b)\left(\sum^m_{j=1}b_j{k_{\sigma_j}} c_j\right):=\sum^m_{j=1}bb_j{k_{\sigma_j}} c_j~\mbox{for all}~b'\in \m B,\sigma\in \Omega,c\in\m C.$$ 
Therefore for $b,b_1,\ldots,b_n\in\m B$; $c_1,\ldots,c_n\in \m C$ and $\sigma_1,\ldots,\sigma_n\in \Omega$ we have
\begin{eqnarray*}
 \left\|\pi(b)\left(\sum\limits_{i=1}^n b_i{k_{\sigma_i}} c_i\right)\right\|^2 
 &=& \left\|\sum\limits_{i=1}^n bb_i{k_{\sigma_i}} c_i\right\|^2
 = \left\|\left< \sum\limits_{i=1}^n bb_i{k_{\sigma_i}} c_i, \sum\limits_{j=1}^n bb_jk_{\sigma_j} c_j\right>\right\|
  \\&=& \left\|\sum\limits_{i,j=1}^n c^*_i  \mf{K}^{\sigma_i,\sigma_j}(\alpha(b^*_ib^*)bb_j) c_j\right\|
 \\&\leq & M(b)\left\|\sum\limits_{i,j=1}^n   c^*_i \mf{K}^{\sigma_i,\sigma_j}(\alpha(b^*_i)b_j) c_j\right\|
= M(b)\left\|\sum\limits_{i=1}^n b_i{k_{\sigma_i}} c_i\right\|^2.
\end{eqnarray*}
This implies that for each $b\in\m B$, $\pi(b)$ is a well defined bounded linear operator from $\m F$ to $\m F$. From
\begin{eqnarray*}
 &&\left< \pi(b)\left(\sum\limits_{i=1}^n b_i{k_{\sigma_i}} c_i\right), \sum\limits_{j=1}^m b'_j{k_{\sigma'_j}} c'_j\right>
=  \left< \sum\limits_{i=1}^n bb_i{k_{\sigma_i}} c_i, \sum\limits_{j=1}^m b'_j{k_{\sigma'_j}} c'_j\right>
\\&=&  \sum\limits_{i=1}^n \sum\limits_{j=1}^m c^*_i  \mf{K}^{\sigma_i,\sigma'_j}(\alpha(b^*_ib^*)b'_j )c'_j
=  \sum\limits_{i=1}^n \sum\limits_{j=1}^m c^*_i  \mf{K}^{\sigma_i,\sigma'_j}(\alpha(b^*_i)\alpha(b^*)b'_j )c'_j
\\ &=&  \left< \sum\limits_{i=1}^n b_i{k_{\sigma_i}} c_i, \sum\limits_{j=1}^m \alpha(b^*)b'_j{k_{\sigma'_j}}c'_j \right>
\end{eqnarray*}
and
\begin{eqnarray*}
U\pi(b^*)U^*\left(\sum\limits_{j=1}^m b'_j{k_{\sigma'_j}} c'_j\right) &=&
U\pi(b^*)\left(\sum\limits_{j=1}^m \alpha^{-1}(b'_j){k_{\sigma'_j}}c'_j \right)
=  U\left(\sum\limits_{j=1}^m b^{*}\alpha^{-1}(b'_j){k_{\sigma'_j}} c'_j\right)
\\ &=& \left(\sum\limits_{j=1}^m \alpha(b^{*}\alpha^{-1}(b'_j)){k_{\sigma'_j}}c'_j )\right)
=\left(\sum\limits_{j=1}^m \alpha(b^*) b'_j{k_{\sigma'_j}} c'_j\right)
\end{eqnarray*}
for all $b,b'_1,\ldots,b'_n\in\m B$; $c'_1,\ldots,c'_n\in \m C$ and $\sigma'_1,\ldots,\sigma'_n\in \Omega$, it follows that 
$\pi$ is an $U$-representation from $\m B$ to the S-module $(\m F,\m C,U)$ and 
$\m F$ becomes an S-correspondence with left action induced by $\pi$.
Indeed, using Equations \ref{eqn12} and \ref{eqn11}, 
we can realize elements $g$ of $\m F$ as $\m C$-valued 
functions on $\Omega\times \m B$ such that they satisfy the following reproducing property:
\[
 g(\sigma,b)=\langle k_{\sigma},bg\rangle~\mbox{for all}~\sigma\in \Omega,~b\in\m B.
\]

Eventually, for each $b,b'\in \m B;~c,c'\in\m C;~x,x'\in E;~\sigma,\sigma'\in \Omega$ we get
\begin{eqnarray*}
\langle \ma K^\sigma(xb)c,\ma K^{\sigma'}(x'b')c'\rangle
&=& c^*\mf{K}^{\sigma,\sigma'}(\langle xb,x'b'\rangle)c'
\\&=&  \langle  {k_{\sigma}} c, b^*\langle x, x'\rangle b'{k_{\sigma'}}  c'\rangle 
\\&=&  \langle \alpha( b){k_{\sigma}} c, \langle x, x'\rangle b'{k_{\sigma'}}  c'\rangle .
\qedhere
 \end{eqnarray*} 
\end{proof}
\begin{remark} The $U$-representation $\pi$ in Theorem \ref{fact} is not necessarily $*$-preserving, 
and $\pi(b^*)^*=\pi(\alpha(b))$ for all $b\in\m B$. In addition if we assume $\alpha=id_{\m B}$ in Theorem \ref{fact}
(i.e., $\mf{K}$ is a CPD-kernel), then $U$ is the identity map and $\pi$ becomes a $*$-preserving representation, and hence
 $\m F$ is a $C^*$-correspondence. Define a linear map
$\nu$ from the interior tensor product $E\bigotimes_{\m B} \m F$ to $F$ by
$$\nu(x\otimes bk_{\sigma}c):=\ma K^{\sigma}(xb)c~\mbox{for all}~x\in E,~b\in\m B,~c\in\m C,~\sigma\in \Omega.$$
Then using Equation \ref{eqn13} we obtain
\begin{align*}
 \langle \nu(x\otimes bk_{\sigma}c),\nu(x'\otimes b'k_{\sigma'}c')\rangle
 &=\langle \ma K^{\sigma}(xb)c,\ma K^{\sigma'}(x'b')c'\rangle=\langle b{k_{\sigma}}c , \langle x, x'\rangle b'{k_{\sigma'}}c' \rangle
 \\&= \langle x\otimes b{k_{\sigma}}c , x'\otimes b'{k_{\sigma'}}c' \rangle
\end{align*}
for all $x,x'\in E;~b,b'\in\m B;~c,c'\in\m C;~\sigma,\sigma'\in S$. Hence $\nu$ is an isometry in this case. This yields a new factorization
for $\mf{K}$-families where $\mf K$ is a CPD-kernel (cf. Section 2 of \cite{Sk12}). In general case (i.e., $\alpha\neq id_{\m B}$) we refer to Theorem \ref{fact}
as a partial factorization theorem for $\mf{K}$-families. 
\end{remark}

\vspace{0.5cm} 
\noindent{\bf Acknowledgements:} Both the authors were supported by Seed Grant from IRCC, IIT Bombay. 
The second author would like to thank 
A. Athavale and F. H. Szafraniec for suggestions. 

\addcontentsline{toc}{chapter}{Bibliography}
\bibliographystyle{amsplain}{
\bibliography{harshbib}}

\end{document}